\UseRawInputEncoding
\documentclass[oneside, a4paper,reqno]{amsart}
\usepackage{pdfsync}
\usepackage{stmaryrd}
\usepackage{mathrsfs}
\usepackage{multicol}
\usepackage{amsmath, amsthm, amscd, amssymb, latexsym, eucal}
\usepackage[all]{xy}
\def\serieslogo@{} \def\@setcopyright{} \makeatother
\usepackage{multienum}

\usepackage{hyperref}
\usepackage{color}
\usepackage{cite}

\hypersetup{colorlinks=true,
     breaklinks=true,
     linkcolor=blue,    
     bookmarks=true,
      pageanchor=true}

\makeatletter
\renewcommand*\env@matrix[1][c]{\hskip -\arraycolsep
  \let\@ifnextchar\new@ifnextchar
  \array{*\c@MaxMatrixCols #1}}
\makeatother

\usepackage{color}

\definecolor{AliceBlue}{rgb}{0.94,0.97,1.00}
\definecolor{AntiqueWhite1}{rgb}{1.00,0.94,0.86}
\definecolor{AntiqueWhite2}{rgb}{0.93,0.87,0.80}
\definecolor{AntiqueWhite3}{rgb}{0.80,0.75,0.69}
\definecolor{AntiqueWhite4}{rgb}{0.55,0.51,0.47}
\definecolor{AntiqueWhite}{rgb}{0.98,0.92,0.84}
\definecolor{BlanchedAlmond}{rgb}{1.00,0.92,0.80}
\definecolor{BlueViolet}{rgb}{0.54,0.17,0.89}
\definecolor{CadetBlue1}{rgb}{0.60,0.96,1.00}
\definecolor{CadetBlue2}{rgb}{0.56,0.90,0.93}
\definecolor{CadetBlue3}{rgb}{0.48,0.77,0.80}
\definecolor{CadetBlue4}{rgb}{0.33,0.53,0.55}
\definecolor{CadetBlue}{rgb}{0.37,0.62,0.63}
\definecolor{CornflowerBlue}{rgb}{0.39,0.58,0.93}
\definecolor{DarkBlue}{rgb}{0.00,0.00,0.55}
\definecolor{DarkCyan}{rgb}{0.00,0.55,0.55}
\definecolor{DarkGoldenrod1}{rgb}{1.00,0.73,0.06}
\definecolor{DarkGoldenrod2}{rgb}{0.93,0.68,0.05}
\definecolor{DarkGoldenrod3}{rgb}{0.80,0.58,0.05}
\definecolor{DarkGoldenrod4}{rgb}{0.55,0.40,0.03}
\definecolor{DarkGoldenrod}{rgb}{0.72,0.53,0.04}
\definecolor{DarkGray}{rgb}{0.66,0.66,0.66}
\definecolor{DarkGreen}{rgb}{0.00,0.39,0.00}
\definecolor{DarkGrey}{rgb}{0.66,0.66,0.66}
\definecolor{DarkKhaki}{rgb}{0.74,0.72,0.42}
\definecolor{DarkMagenta}{rgb}{0.55,0.00,0.55}
\definecolor{DarkOliveGreen1}{rgb}{0.79,1.00,0.44}
\definecolor{DarkOliveGreen2}{rgb}{0.74,0.93,0.41}
\definecolor{DarkOliveGreen3}{rgb}{0.64,0.80,0.35}
\definecolor{DarkOliveGreen4}{rgb}{0.43,0.55,0.24}
\definecolor{DarkOliveGreen}{rgb}{0.33,0.42,0.18}
\definecolor{DarkOrange1}{rgb}{1.00,0.50,0.00}
\definecolor{DarkOrange2}{rgb}{0.93,0.46,0.00}
\definecolor{DarkOrange3}{rgb}{0.80,0.40,0.00}
\definecolor{DarkOrange4}{rgb}{0.55,0.27,0.00}
\definecolor{DarkOrange}{rgb}{1.00,0.55,0.00}
\definecolor{DarkOrchid1}{rgb}{0.75,0.24,1.00}
\definecolor{DarkOrchid2}{rgb}{0.70,0.23,0.93}
\definecolor{DarkOrchid3}{rgb}{0.60,0.20,0.80}
\definecolor{DarkOrchid4}{rgb}{0.41,0.13,0.55}
\definecolor{DarkOrchid}{rgb}{0.60,0.20,0.80}
\definecolor{DarkRed}{rgb}{0.55,0.00,0.00}
\definecolor{DarkSalmon}{rgb}{0.91,0.59,0.48}
\definecolor{DarkSeaGreen1}{rgb}{0.76,1.00,0.76}
\definecolor{DarkSeaGreen2}{rgb}{0.71,0.93,0.71}
\definecolor{DarkSeaGreen3}{rgb}{0.61,0.80,0.61}
\definecolor{DarkSeaGreen4}{rgb}{0.41,0.55,0.41}
\definecolor{DarkSeaGreen}{rgb}{0.56,0.74,0.56}
\definecolor{DarkSlateBlue}{rgb}{0.28,0.24,0.55}
\definecolor{DarkSlateGray1}{rgb}{0.59,1.00,1.00}
\definecolor{DarkSlateGray2}{rgb}{0.55,0.93,0.93}
\definecolor{DarkSlateGray3}{rgb}{0.47,0.80,0.80}
\definecolor{DarkSlateGray4}{rgb}{0.32,0.55,0.55}
\definecolor{DarkSlateGray}{rgb}{0.18,0.31,0.31}
\definecolor{DarkSlateGrey}{rgb}{0.18,0.31,0.31}
\definecolor{DarkTurquoise}{rgb}{0.00,0.81,0.82}
\definecolor{DarkViolet}{rgb}{0.58,0.00,0.83}
\definecolor{DeepPink1}{rgb}{1.00,0.08,0.58}
\definecolor{DeepPink2}{rgb}{0.93,0.07,0.54}
\definecolor{DeepPink3}{rgb}{0.80,0.06,0.46}
\definecolor{DeepPink4}{rgb}{0.55,0.04,0.31}
\definecolor{DeepPink}{rgb}{1.00,0.08,0.58}
\definecolor{DeepSkyBlue1}{rgb}{0.00,0.75,1.00}
\definecolor{DeepSkyBlue2}{rgb}{0.00,0.70,0.93}
\definecolor{DeepSkyBlue3}{rgb}{0.00,0.60,0.80}
\definecolor{DeepSkyBlue4}{rgb}{0.00,0.41,0.55}
\definecolor{DeepSkyBlue}{rgb}{0.00,0.75,1.00}
\definecolor{DimGray}{rgb}{0.41,0.41,0.41}
\definecolor{DimGrey}{rgb}{0.41,0.41,0.41}
\definecolor{DodgerBlue1}{rgb}{0.12,0.56,1.00}
\definecolor{DodgerBlue2}{rgb}{0.11,0.53,0.93}
\definecolor{DodgerBlue3}{rgb}{0.09,0.45,0.80}
\definecolor{DodgerBlue4}{rgb}{0.06,0.31,0.55}
\definecolor{DodgerBlue}{rgb}{0.12,0.56,1.00}
\definecolor{FloralWhite}{rgb}{1.00,0.98,0.94}
\definecolor{ForestGreen}{rgb}{0.13,0.55,0.13}
\definecolor{GhostWhite}{rgb}{0.97,0.97,1.00}
\definecolor{GreenYellow}{rgb}{0.68,1.00,0.18}
\definecolor{HotPink1}{rgb}{1.00,0.43,0.71}
\definecolor{HotPink2}{rgb}{0.93,0.42,0.65}
\definecolor{HotPink3}{rgb}{0.80,0.38,0.56}
\definecolor{HotPink4}{rgb}{0.55,0.23,0.38}
\definecolor{HotPink}{rgb}{1.00,0.41,0.71}
\definecolor{IndianRed1}{rgb}{1.00,0.42,0.42}
\definecolor{IndianRed2}{rgb}{0.93,0.39,0.39}
\definecolor{IndianRed3}{rgb}{0.80,0.33,0.33}
\definecolor{IndianRed4}{rgb}{0.55,0.23,0.23}
\definecolor{IndianRed}{rgb}{0.80,0.36,0.36}
\definecolor{LavenderBlush1}{rgb}{1.00,0.94,0.96}
\definecolor{LavenderBlush2}{rgb}{0.93,0.88,0.90}
\definecolor{LavenderBlush3}{rgb}{0.80,0.76,0.77}
\definecolor{LavenderBlush4}{rgb}{0.55,0.51,0.53}
\definecolor{LavenderBlush}{rgb}{1.00,0.94,0.96}
\definecolor{LawnGreen}{rgb}{0.49,0.99,0.00}
\definecolor{LemonChiffon1}{rgb}{1.00,0.98,0.80}
\definecolor{LemonChiffon2}{rgb}{0.93,0.91,0.75}
\definecolor{LemonChiffon3}{rgb}{0.80,0.79,0.65}
\definecolor{LemonChiffon4}{rgb}{0.55,0.54,0.44}
\definecolor{LemonChiffon}{rgb}{1.00,0.98,0.80}
\definecolor{LightBlue1}{rgb}{0.75,0.94,1.00}
\definecolor{LightBlue2}{rgb}{0.70,0.87,0.93}
\definecolor{LightBlue3}{rgb}{0.60,0.75,0.80}
\definecolor{LightBlue4}{rgb}{0.41,0.51,0.55}
\definecolor{LightBlue}{rgb}{0.68,0.85,0.90}
\definecolor{LightCoral}{rgb}{0.94,0.50,0.50}
\definecolor{LightCyan1}{rgb}{0.88,1.00,1.00}
\definecolor{LightCyan2}{rgb}{0.82,0.93,0.93}
\definecolor{LightCyan3}{rgb}{0.71,0.80,0.80}
\definecolor{LightCyan4}{rgb}{0.48,0.55,0.55}
\definecolor{LightCyan}{rgb}{0.88,1.00,1.00}
\definecolor{LightGoldenrod1}{rgb}{1.00,0.93,0.55}
\definecolor{LightGoldenrod2}{rgb}{0.93,0.86,0.51}
\definecolor{LightGoldenrod3}{rgb}{0.80,0.75,0.44}
\definecolor{LightGoldenrod4}{rgb}{0.55,0.51,0.30}
\definecolor{LightGoldenrodYellow}{rgb}{0.98,0.98,0.82}
\definecolor{LightGoldenrod}{rgb}{0.93,0.87,0.51}
\definecolor{LightGray}{rgb}{0.83,0.83,0.83}
\definecolor{LightGreen}{rgb}{0.56,0.93,0.56}
\definecolor{LightGrey}{rgb}{0.83,0.83,0.83}
\definecolor{LightPink1}{rgb}{1.00,0.68,0.73}
\definecolor{LightPink2}{rgb}{0.93,0.64,0.68}
\definecolor{LightPink3}{rgb}{0.80,0.55,0.58}
\definecolor{LightPink4}{rgb}{0.55,0.37,0.40}
\definecolor{LightPink}{rgb}{1.00,0.71,0.76}
\definecolor{LightSalmon1}{rgb}{1.00,0.63,0.48}
\definecolor{LightSalmon2}{rgb}{0.93,0.58,0.45}
\definecolor{LightSalmon3}{rgb}{0.80,0.51,0.38}
\definecolor{LightSalmon4}{rgb}{0.55,0.34,0.26}
\definecolor{LightSalmon}{rgb}{1.00,0.63,0.48}
\definecolor{LightSeaGreen}{rgb}{0.13,0.70,0.67}
\definecolor{LightSkyBlue1}{rgb}{0.69,0.89,1.00}
\definecolor{LightSkyBlue2}{rgb}{0.64,0.83,0.93}
\definecolor{LightSkyBlue3}{rgb}{0.55,0.71,0.80}
\definecolor{LightSkyBlue4}{rgb}{0.38,0.48,0.55}
\definecolor{LightSkyBlue}{rgb}{0.53,0.81,0.98}
\definecolor{LightSlateBlue}{rgb}{0.52,0.44,1.00}
\definecolor{LightSlateGray}{rgb}{0.47,0.53,0.60}
\definecolor{LightSlateGrey}{rgb}{0.47,0.53,0.60}
\definecolor{LightSteelBlue1}{rgb}{0.79,0.88,1.00}
\definecolor{LightSteelBlue2}{rgb}{0.74,0.82,0.93}
\definecolor{LightSteelBlue3}{rgb}{0.64,0.71,0.80}
\definecolor{LightSteelBlue4}{rgb}{0.43,0.48,0.55}
\definecolor{LightSteelBlue}{rgb}{0.69,0.77,0.87}
\definecolor{LightYellow1}{rgb}{1.00,1.00,0.88}
\definecolor{LightYellow2}{rgb}{0.93,0.93,0.82}
\definecolor{LightYellow3}{rgb}{0.80,0.80,0.71}
\definecolor{LightYellow4}{rgb}{0.55,0.55,0.48}
\definecolor{LightYellow}{rgb}{1.00,1.00,0.88}
\definecolor{LimeGreen}{rgb}{0.20,0.80,0.20}
\definecolor{MediumAquamarine}{rgb}{0.40,0.80,0.67}
\definecolor{MediumBlue}{rgb}{0.00,0.00,0.80}
\definecolor{MediumOrchid1}{rgb}{0.88,0.40,1.00}
\definecolor{MediumOrchid2}{rgb}{0.82,0.37,0.93}
\definecolor{MediumOrchid3}{rgb}{0.71,0.32,0.80}
\definecolor{MediumOrchid4}{rgb}{0.48,0.22,0.55}
\definecolor{MediumOrchid}{rgb}{0.73,0.33,0.83}
\definecolor{MediumPurple1}{rgb}{0.67,0.51,1.00}
\definecolor{MediumPurple2}{rgb}{0.62,0.47,0.93}
\definecolor{MediumPurple3}{rgb}{0.54,0.41,0.80}
\definecolor{MediumPurple4}{rgb}{0.36,0.28,0.55}
\definecolor{MediumPurple}{rgb}{0.58,0.44,0.86}
\definecolor{MediumSeaGreen}{rgb}{0.24,0.70,0.44}
\definecolor{MediumSlateBlue}{rgb}{0.48,0.41,0.93}
\definecolor{MediumSpringGreen}{rgb}{0.00,0.98,0.60}
\definecolor{MediumTurquoise}{rgb}{0.28,0.82,0.80}
\definecolor{MediumVioletRed}{rgb}{0.78,0.08,0.52}
\definecolor{MidnightBlue}{rgb}{0.10,0.10,0.44}
\definecolor{MintCream}{rgb}{0.96,1.00,0.98}
\definecolor{MistyRose1}{rgb}{1.00,0.89,0.88}
\definecolor{MistyRose2}{rgb}{0.93,0.84,0.82}
\definecolor{MistyRose3}{rgb}{0.80,0.72,0.71}
\definecolor{MistyRose4}{rgb}{0.55,0.49,0.48}
\definecolor{MistyRose}{rgb}{1.00,0.89,0.88}
\definecolor{NavajoWhite1}{rgb}{1.00,0.87,0.68}
\definecolor{NavajoWhite2}{rgb}{0.93,0.81,0.63}
\definecolor{NavajoWhite3}{rgb}{0.80,0.70,0.55}
\definecolor{NavajoWhite4}{rgb}{0.55,0.47,0.37}
\definecolor{NavajoWhite}{rgb}{1.00,0.87,0.68}
\definecolor{NavyBlue}{rgb}{0.00,0.00,0.50}
\definecolor{OldLace}{rgb}{0.99,0.96,0.90}
\definecolor{OliveDrab1}{rgb}{0.75,1.00,0.24}
\definecolor{OliveDrab2}{rgb}{0.70,0.93,0.23}
\definecolor{OliveDrab3}{rgb}{0.60,0.80,0.20}
\definecolor{OliveDrab4}{rgb}{0.41,0.55,0.13}
\definecolor{OliveDrab}{rgb}{0.42,0.56,0.14}
\definecolor{OrangeRed1}{rgb}{1.00,0.27,0.00}
\definecolor{OrangeRed2}{rgb}{0.93,0.25,0.00}
\definecolor{OrangeRed3}{rgb}{0.80,0.22,0.00}
\definecolor{OrangeRed4}{rgb}{0.55,0.15,0.00}
\definecolor{OrangeRed}{rgb}{1.00,0.27,0.00}
\definecolor{PaleGoldenrod}{rgb}{0.93,0.91,0.67}
\definecolor{PaleGreen1}{rgb}{0.60,1.00,0.60}
\definecolor{PaleGreen2}{rgb}{0.56,0.93,0.56}
\definecolor{PaleGreen3}{rgb}{0.49,0.80,0.49}
\definecolor{PaleGreen4}{rgb}{0.33,0.55,0.33}
\definecolor{PaleGreen}{rgb}{0.60,0.98,0.60}
\definecolor{PaleTurquoise1}{rgb}{0.73,1.00,1.00}
\definecolor{PaleTurquoise2}{rgb}{0.68,0.93,0.93}
\definecolor{PaleTurquoise3}{rgb}{0.59,0.80,0.80}
\definecolor{PaleTurquoise4}{rgb}{0.40,0.55,0.55}
\definecolor{PaleTurquoise}{rgb}{0.69,0.93,0.93}
\definecolor{PaleVioletRed1}{rgb}{1.00,0.51,0.67}
\definecolor{PaleVioletRed2}{rgb}{0.93,0.47,0.62}
\definecolor{PaleVioletRed3}{rgb}{0.80,0.41,0.54}
\definecolor{PaleVioletRed4}{rgb}{0.55,0.28,0.36}
\definecolor{PaleVioletRed}{rgb}{0.86,0.44,0.58}
\definecolor{PapayaWhip}{rgb}{1.00,0.94,0.84}
\definecolor{PeachPuff1}{rgb}{1.00,0.85,0.73}
\definecolor{PeachPuff2}{rgb}{0.93,0.80,0.68}
\definecolor{PeachPuff3}{rgb}{0.80,0.69,0.58}
\definecolor{PeachPuff4}{rgb}{0.55,0.47,0.40}
\definecolor{PeachPuff}{rgb}{1.00,0.85,0.73}
\definecolor{PowderBlue}{rgb}{0.69,0.88,0.90}
\definecolor{RosyBrown1}{rgb}{1.00,0.76,0.76}
\definecolor{RosyBrown2}{rgb}{0.93,0.71,0.71}
\definecolor{RosyBrown3}{rgb}{0.80,0.61,0.61}
\definecolor{RosyBrown4}{rgb}{0.55,0.41,0.41}
\definecolor{RosyBrown}{rgb}{0.74,0.56,0.56}
\definecolor{RoyalBlue1}{rgb}{0.28,0.46,1.00}
\definecolor{RoyalBlue2}{rgb}{0.26,0.43,0.93}
\definecolor{RoyalBlue3}{rgb}{0.23,0.37,0.80}
\definecolor{RoyalBlue4}{rgb}{0.15,0.25,0.55}
\definecolor{RoyalBlue}{rgb}{0.25,0.41,0.88}
\definecolor{SaddleBrown}{rgb}{0.55,0.27,0.07}
\definecolor{SandyBrown}{rgb}{0.96,0.64,0.38}
\definecolor{SeaGreen1}{rgb}{0.33,1.00,0.62}
\definecolor{SeaGreen2}{rgb}{0.31,0.93,0.58}
\definecolor{SeaGreen3}{rgb}{0.26,0.80,0.50}
\definecolor{SeaGreen4}{rgb}{0.18,0.55,0.34}
\definecolor{SeaGreen}{rgb}{0.18,0.55,0.34}
\definecolor{SkyBlue1}{rgb}{0.53,0.81,1.00}
\definecolor{SkyBlue2}{rgb}{0.49,0.75,0.93}
\definecolor{SkyBlue3}{rgb}{0.42,0.65,0.80}
\definecolor{SkyBlue4}{rgb}{0.29,0.44,0.55}
\definecolor{SkyBlue}{rgb}{0.53,0.81,0.92}
\definecolor{SlateBlue1}{rgb}{0.51,0.44,1.00}
\definecolor{SlateBlue2}{rgb}{0.48,0.40,0.93}
\definecolor{SlateBlue3}{rgb}{0.41,0.35,0.80}
\definecolor{SlateBlue4}{rgb}{0.28,0.24,0.55}
\definecolor{SlateBlue}{rgb}{0.42,0.35,0.80}
\definecolor{SlateGray1}{rgb}{0.78,0.89,1.00}
\definecolor{SlateGray2}{rgb}{0.73,0.83,0.93}
\definecolor{SlateGray3}{rgb}{0.62,0.71,0.80}
\definecolor{SlateGray4}{rgb}{0.42,0.48,0.55}
\definecolor{SlateGray}{rgb}{0.44,0.50,0.56}
\definecolor{SlateGrey}{rgb}{0.44,0.50,0.56}
\definecolor{SpringGreen1}{rgb}{0.00,1.00,0.50}
\definecolor{SpringGreen2}{rgb}{0.00,0.93,0.46}
\definecolor{SpringGreen3}{rgb}{0.00,0.80,0.40}
\definecolor{SpringGreen4}{rgb}{0.00,0.55,0.27}
\definecolor{SpringGreen}{rgb}{0.00,1.00,0.50}
\definecolor{SteelBlue1}{rgb}{0.39,0.72,1.00}
\definecolor{SteelBlue2}{rgb}{0.36,0.67,0.93}
\definecolor{SteelBlue3}{rgb}{0.31,0.58,0.80}
\definecolor{SteelBlue4}{rgb}{0.21,0.39,0.55}
\definecolor{SteelBlue}{rgb}{0.27,0.51,0.71}
\definecolor{VioletRed1}{rgb}{1.00,0.24,0.59}
\definecolor{VioletRed2}{rgb}{0.93,0.23,0.55}
\definecolor{VioletRed3}{rgb}{0.80,0.20,0.47}
\definecolor{VioletRed4}{rgb}{0.55,0.13,0.32}
\definecolor{VioletRed}{rgb}{0.82,0.13,0.56}
\definecolor{WhiteSmoke}{rgb}{0.96,0.96,0.96}
\definecolor{YellowGreen}{rgb}{0.60,0.80,0.20}
\definecolor{aliceblue}{rgb}{0.94,0.97,1.00}
\definecolor{antiquewhite}{rgb}{0.98,0.92,0.84}
\definecolor{aquamarine1}{rgb}{0.50,1.00,0.83}
\definecolor{aquamarine2}{rgb}{0.46,0.93,0.78}
\definecolor{aquamarine3}{rgb}{0.40,0.80,0.67}
\definecolor{aquamarine4}{rgb}{0.27,0.55,0.45}
\definecolor{aquamarine}{rgb}{0.50,1.00,0.83}
\definecolor{azure1}{rgb}{0.94,1.00,1.00}
\definecolor{azure2}{rgb}{0.88,0.93,0.93}
\definecolor{azure3}{rgb}{0.76,0.80,0.80}
\definecolor{azure4}{rgb}{0.51,0.55,0.55}
\definecolor{azure}{rgb}{0.94,1.00,1.00}
\definecolor{beige}{rgb}{0.96,0.96,0.86}
\definecolor{bisque1}{rgb}{1.00,0.89,0.77}
\definecolor{bisque2}{rgb}{0.93,0.84,0.72}
\definecolor{bisque3}{rgb}{0.80,0.72,0.62}
\definecolor{bisque4}{rgb}{0.55,0.49,0.42}
\definecolor{bisque}{rgb}{1.00,0.89,0.77}
\definecolor{black}{rgb}{0.00,0.00,0.00}
\definecolor{blanchedalmond}{rgb}{1.00,0.92,0.80}
\definecolor{blue1}{rgb}{0.00,0.00,1.00}
\definecolor{blue2}{rgb}{0.00,0.00,0.93}
\definecolor{blue3}{rgb}{0.00,0.00,0.80}
\definecolor{blue4}{rgb}{0.00,0.00,0.55}
\definecolor{blueviolet}{rgb}{0.54,0.17,0.89}
\definecolor{blue}{rgb}{0.00,0.00,1.00}
\definecolor{brown1}{rgb}{1.00,0.25,0.25}
\definecolor{brown2}{rgb}{0.93,0.23,0.23}
\definecolor{brown3}{rgb}{0.80,0.20,0.20}
\definecolor{brown4}{rgb}{0.55,0.14,0.14}
\definecolor{brown}{rgb}{0.65,0.16,0.16}
\definecolor{burlywood1}{rgb}{1.00,0.83,0.61}
\definecolor{burlywood2}{rgb}{0.93,0.77,0.57}
\definecolor{burlywood3}{rgb}{0.80,0.67,0.49}
\definecolor{burlywood4}{rgb}{0.55,0.45,0.33}
\definecolor{burlywood}{rgb}{0.87,0.72,0.53}
\definecolor{cadetblue}{rgb}{0.37,0.62,0.63}
\definecolor{chartreuse1}{rgb}{0.50,1.00,0.00}
\definecolor{chartreuse2}{rgb}{0.46,0.93,0.00}
\definecolor{chartreuse3}{rgb}{0.40,0.80,0.00}
\definecolor{chartreuse4}{rgb}{0.27,0.55,0.00}
\definecolor{chartreuse}{rgb}{0.50,1.00,0.00}
\definecolor{chocolate1}{rgb}{1.00,0.50,0.14}
\definecolor{chocolate2}{rgb}{0.93,0.46,0.13}
\definecolor{chocolate3}{rgb}{0.80,0.40,0.11}
\definecolor{chocolate4}{rgb}{0.55,0.27,0.07}
\definecolor{chocolate}{rgb}{0.82,0.41,0.12}
\definecolor{coral1}{rgb}{1.00,0.45,0.34}
\definecolor{coral2}{rgb}{0.93,0.42,0.31}
\definecolor{coral3}{rgb}{0.80,0.36,0.27}
\definecolor{coral4}{rgb}{0.55,0.24,0.18}
\definecolor{coral}{rgb}{1.00,0.50,0.31}
\definecolor{cornflowerblue}{rgb}{0.39,0.58,0.93}
\definecolor{cornsilk1}{rgb}{1.00,0.97,0.86}
\definecolor{cornsilk2}{rgb}{0.93,0.91,0.80}
\definecolor{cornsilk3}{rgb}{0.80,0.78,0.69}
\definecolor{cornsilk4}{rgb}{0.55,0.53,0.47}
\definecolor{cornsilk}{rgb}{1.00,0.97,0.86}
\definecolor{cyan1}{rgb}{0.00,1.00,1.00}
\definecolor{cyan2}{rgb}{0.00,0.93,0.93}
\definecolor{cyan3}{rgb}{0.00,0.80,0.80}
\definecolor{cyan4}{rgb}{0.00,0.55,0.55}
\definecolor{cyan}{rgb}{0.00,1.00,1.00}
\definecolor{darkblue}{rgb}{0.00,0.00,0.55}
\definecolor{darkcyan}{rgb}{0.00,0.55,0.55}
\definecolor{darkgoldenrod}{rgb}{0.72,0.53,0.04}
\definecolor{darkgray}{rgb}{0.66,0.66,0.66}
\definecolor{darkgreen}{rgb}{0.00,0.39,0.00}
\definecolor{darkgrey}{rgb}{0.66,0.66,0.66}
\definecolor{darkkhaki}{rgb}{0.74,0.72,0.42}
\definecolor{darkmagenta}{rgb}{0.55,0.00,0.55}
\definecolor{darkolive}{rgb}{0.33,0.42,0.18}
\definecolor{darkorange}{rgb}{1.00,0.55,0.00}
\definecolor{darkorchid}{rgb}{0.60,0.20,0.80}
\definecolor{darkred}{rgb}{0.55,0.00,0.00}
\definecolor{darksalmon}{rgb}{0.91,0.59,0.48}
\definecolor{darksea}{rgb}{0.56,0.74,0.56}
\definecolor{darkslate}{rgb}{0.18,0.31,0.31}
\definecolor{darkslate}{rgb}{0.18,0.31,0.31}
\definecolor{darkslate}{rgb}{0.28,0.24,0.55}
\definecolor{darkturquoise}{rgb}{0.00,0.81,0.82}
\definecolor{darkviolet}{rgb}{0.58,0.00,0.83}
\definecolor{deeppink}{rgb}{1.00,0.08,0.58}
\definecolor{deepsky}{rgb}{0.00,0.75,1.00}
\definecolor{dimgray}{rgb}{0.41,0.41,0.41}
\definecolor{dimgrey}{rgb}{0.41,0.41,0.41}
\definecolor{dodgerblue}{rgb}{0.12,0.56,1.00}
\definecolor{firebrick1}{rgb}{1.00,0.19,0.19}
\definecolor{firebrick2}{rgb}{0.93,0.17,0.17}
\definecolor{firebrick3}{rgb}{0.80,0.15,0.15}
\definecolor{firebrick4}{rgb}{0.55,0.10,0.10}
\definecolor{firebrick}{rgb}{0.70,0.13,0.13}
\definecolor{floralwhite}{rgb}{1.00,0.98,0.94}
\definecolor{forestgreen}{rgb}{0.13,0.55,0.13}
\definecolor{gainsboro}{rgb}{0.86,0.86,0.86}
\definecolor{ghostwhite}{rgb}{0.97,0.97,1.00}
\definecolor{gold1}{rgb}{1.00,0.84,0.00}
\definecolor{gold2}{rgb}{0.93,0.79,0.00}
\definecolor{gold3}{rgb}{0.80,0.68,0.00}
\definecolor{gold4}{rgb}{0.55,0.46,0.00}
\definecolor{goldenrod1}{rgb}{1.00,0.76,0.15}
\definecolor{goldenrod2}{rgb}{0.93,0.71,0.13}
\definecolor{goldenrod3}{rgb}{0.80,0.61,0.11}
\definecolor{goldenrod4}{rgb}{0.55,0.41,0.08}
\definecolor{goldenrod}{rgb}{0.85,0.65,0.13}
\definecolor{gold}{rgb}{1.00,0.84,0.00}
\definecolor{gray0}{rgb}{0.00,0.00,0.00}
\definecolor{gray100}{rgb}{1.00,1.00,1.00}
\definecolor{gray10}{rgb}{0.10,0.10,0.10}
\definecolor{gray11}{rgb}{0.11,0.11,0.11}
\definecolor{gray12}{rgb}{0.12,0.12,0.12}
\definecolor{gray13}{rgb}{0.13,0.13,0.13}
\definecolor{gray14}{rgb}{0.14,0.14,0.14}
\definecolor{gray15}{rgb}{0.15,0.15,0.15}
\definecolor{gray16}{rgb}{0.16,0.16,0.16}
\definecolor{gray17}{rgb}{0.17,0.17,0.17}
\definecolor{gray18}{rgb}{0.18,0.18,0.18}
\definecolor{gray19}{rgb}{0.19,0.19,0.19}
\definecolor{gray1}{rgb}{0.01,0.01,0.01}
\definecolor{gray20}{rgb}{0.20,0.20,0.20}
\definecolor{gray21}{rgb}{0.21,0.21,0.21}
\definecolor{gray22}{rgb}{0.22,0.22,0.22}
\definecolor{gray23}{rgb}{0.23,0.23,0.23}
\definecolor{gray24}{rgb}{0.24,0.24,0.24}
\definecolor{gray25}{rgb}{0.25,0.25,0.25}
\definecolor{gray26}{rgb}{0.26,0.26,0.26}
\definecolor{gray27}{rgb}{0.27,0.27,0.27}
\definecolor{gray28}{rgb}{0.28,0.28,0.28}
\definecolor{gray29}{rgb}{0.29,0.29,0.29}
\definecolor{gray2}{rgb}{0.02,0.02,0.02}
\definecolor{gray30}{rgb}{0.30,0.30,0.30}
\definecolor{gray31}{rgb}{0.31,0.31,0.31}
\definecolor{gray32}{rgb}{0.32,0.32,0.32}
\definecolor{gray33}{rgb}{0.33,0.33,0.33}
\definecolor{gray34}{rgb}{0.34,0.34,0.34}
\definecolor{gray35}{rgb}{0.35,0.35,0.35}
\definecolor{gray36}{rgb}{0.36,0.36,0.36}
\definecolor{gray37}{rgb}{0.37,0.37,0.37}
\definecolor{gray38}{rgb}{0.38,0.38,0.38}
\definecolor{gray39}{rgb}{0.39,0.39,0.39}
\definecolor{gray3}{rgb}{0.03,0.03,0.03}
\definecolor{gray40}{rgb}{0.40,0.40,0.40}
\definecolor{gray41}{rgb}{0.41,0.41,0.41}
\definecolor{gray42}{rgb}{0.42,0.42,0.42}
\definecolor{gray43}{rgb}{0.43,0.43,0.43}
\definecolor{gray44}{rgb}{0.44,0.44,0.44}
\definecolor{gray45}{rgb}{0.45,0.45,0.45}
\definecolor{gray46}{rgb}{0.46,0.46,0.46}
\definecolor{gray47}{rgb}{0.47,0.47,0.47}
\definecolor{gray48}{rgb}{0.48,0.48,0.48}
\definecolor{gray49}{rgb}{0.49,0.49,0.49}
\definecolor{gray4}{rgb}{0.04,0.04,0.04}
\definecolor{gray50}{rgb}{0.50,0.50,0.50}
\definecolor{gray51}{rgb}{0.51,0.51,0.51}
\definecolor{gray52}{rgb}{0.52,0.52,0.52}
\definecolor{gray53}{rgb}{0.53,0.53,0.53}
\definecolor{gray54}{rgb}{0.54,0.54,0.54}
\definecolor{gray55}{rgb}{0.55,0.55,0.55}
\definecolor{gray56}{rgb}{0.56,0.56,0.56}
\definecolor{gray57}{rgb}{0.57,0.57,0.57}
\definecolor{gray58}{rgb}{0.58,0.58,0.58}
\definecolor{gray59}{rgb}{0.59,0.59,0.59}
\definecolor{gray5}{rgb}{0.05,0.05,0.05}
\definecolor{gray60}{rgb}{0.60,0.60,0.60}
\definecolor{gray61}{rgb}{0.61,0.61,0.61}
\definecolor{gray62}{rgb}{0.62,0.62,0.62}
\definecolor{gray63}{rgb}{0.63,0.63,0.63}
\definecolor{gray64}{rgb}{0.64,0.64,0.64}
\definecolor{gray65}{rgb}{0.65,0.65,0.65}
\definecolor{gray66}{rgb}{0.66,0.66,0.66}
\definecolor{gray67}{rgb}{0.67,0.67,0.67}
\definecolor{gray68}{rgb}{0.68,0.68,0.68}
\definecolor{gray69}{rgb}{0.69,0.69,0.69}
\definecolor{gray6}{rgb}{0.06,0.06,0.06}
\definecolor{gray70}{rgb}{0.70,0.70,0.70}
\definecolor{gray71}{rgb}{0.71,0.71,0.71}
\definecolor{gray72}{rgb}{0.72,0.72,0.72}
\definecolor{gray73}{rgb}{0.73,0.73,0.73}
\definecolor{gray74}{rgb}{0.74,0.74,0.74}
\definecolor{gray75}{rgb}{0.75,0.75,0.75}
\definecolor{gray76}{rgb}{0.76,0.76,0.76}
\definecolor{gray77}{rgb}{0.77,0.77,0.77}
\definecolor{gray78}{rgb}{0.78,0.78,0.78}
\definecolor{gray79}{rgb}{0.79,0.79,0.79}
\definecolor{gray7}{rgb}{0.07,0.07,0.07}
\definecolor{gray80}{rgb}{0.80,0.80,0.80}
\definecolor{gray81}{rgb}{0.81,0.81,0.81}
\definecolor{gray82}{rgb}{0.82,0.82,0.82}
\definecolor{gray83}{rgb}{0.83,0.83,0.83}
\definecolor{gray84}{rgb}{0.84,0.84,0.84}
\definecolor{gray85}{rgb}{0.85,0.85,0.85}
\definecolor{gray86}{rgb}{0.86,0.86,0.86}
\definecolor{gray87}{rgb}{0.87,0.87,0.87}
\definecolor{gray88}{rgb}{0.88,0.88,0.88}
\definecolor{gray89}{rgb}{0.89,0.89,0.89}
\definecolor{gray8}{rgb}{0.08,0.08,0.08}
\definecolor{gray90}{rgb}{0.90,0.90,0.90}
\definecolor{gray91}{rgb}{0.91,0.91,0.91}
\definecolor{gray92}{rgb}{0.92,0.92,0.92}
\definecolor{gray93}{rgb}{0.93,0.93,0.93}
\definecolor{gray94}{rgb}{0.94,0.94,0.94}
\definecolor{gray95}{rgb}{0.95,0.95,0.95}
\definecolor{gray96}{rgb}{0.96,0.96,0.96}
\definecolor{gray97}{rgb}{0.97,0.97,0.97}
\definecolor{gray98}{rgb}{0.98,0.98,0.98}
\definecolor{gray99}{rgb}{0.99,0.99,0.99}
\definecolor{gray9}{rgb}{0.09,0.09,0.09}
\definecolor{gray}{rgb}{0.75,0.75,0.75}
\definecolor{green1}{rgb}{0.00,1.00,0.00}
\definecolor{green2}{rgb}{0.00,0.93,0.00}
\definecolor{green3}{rgb}{0.00,0.80,0.00}
\definecolor{green4}{rgb}{0.00,0.55,0.00}
\definecolor{greenyellow}{rgb}{0.68,1.00,0.18}
\definecolor{green}{rgb}{0.00,1.00,0.00}
\definecolor{grey0}{rgb}{0.00,0.00,0.00}
\definecolor{grey100}{rgb}{1.00,1.00,1.00}
\definecolor{grey10}{rgb}{0.10,0.10,0.10}
\definecolor{grey11}{rgb}{0.11,0.11,0.11}
\definecolor{grey12}{rgb}{0.12,0.12,0.12}
\definecolor{grey13}{rgb}{0.13,0.13,0.13}
\definecolor{grey14}{rgb}{0.14,0.14,0.14}
\definecolor{grey15}{rgb}{0.15,0.15,0.15}
\definecolor{grey16}{rgb}{0.16,0.16,0.16}
\definecolor{grey17}{rgb}{0.17,0.17,0.17}
\definecolor{grey18}{rgb}{0.18,0.18,0.18}
\definecolor{grey19}{rgb}{0.19,0.19,0.19}
\definecolor{grey1}{rgb}{0.01,0.01,0.01}
\definecolor{grey20}{rgb}{0.20,0.20,0.20}
\definecolor{grey21}{rgb}{0.21,0.21,0.21}
\definecolor{grey22}{rgb}{0.22,0.22,0.22}
\definecolor{grey23}{rgb}{0.23,0.23,0.23}
\definecolor{grey24}{rgb}{0.24,0.24,0.24}
\definecolor{grey25}{rgb}{0.25,0.25,0.25}
\definecolor{grey26}{rgb}{0.26,0.26,0.26}
\definecolor{grey27}{rgb}{0.27,0.27,0.27}
\definecolor{grey28}{rgb}{0.28,0.28,0.28}
\definecolor{grey29}{rgb}{0.29,0.29,0.29}
\definecolor{grey2}{rgb}{0.02,0.02,0.02}
\definecolor{grey30}{rgb}{0.30,0.30,0.30}
\definecolor{grey31}{rgb}{0.31,0.31,0.31}
\definecolor{grey32}{rgb}{0.32,0.32,0.32}
\definecolor{grey33}{rgb}{0.33,0.33,0.33}
\definecolor{grey34}{rgb}{0.34,0.34,0.34}
\definecolor{grey35}{rgb}{0.35,0.35,0.35}
\definecolor{grey36}{rgb}{0.36,0.36,0.36}
\definecolor{grey37}{rgb}{0.37,0.37,0.37}
\definecolor{grey38}{rgb}{0.38,0.38,0.38}
\definecolor{grey39}{rgb}{0.39,0.39,0.39}
\definecolor{grey3}{rgb}{0.03,0.03,0.03}
\definecolor{grey40}{rgb}{0.40,0.40,0.40}
\definecolor{grey41}{rgb}{0.41,0.41,0.41}
\definecolor{grey42}{rgb}{0.42,0.42,0.42}
\definecolor{grey43}{rgb}{0.43,0.43,0.43}
\definecolor{grey44}{rgb}{0.44,0.44,0.44}
\definecolor{grey45}{rgb}{0.45,0.45,0.45}
\definecolor{grey46}{rgb}{0.46,0.46,0.46}
\definecolor{grey47}{rgb}{0.47,0.47,0.47}
\definecolor{grey48}{rgb}{0.48,0.48,0.48}
\definecolor{grey49}{rgb}{0.49,0.49,0.49}
\definecolor{grey4}{rgb}{0.04,0.04,0.04}
\definecolor{grey50}{rgb}{0.50,0.50,0.50}
\definecolor{grey51}{rgb}{0.51,0.51,0.51}
\definecolor{grey52}{rgb}{0.52,0.52,0.52}
\definecolor{grey53}{rgb}{0.53,0.53,0.53}
\definecolor{grey54}{rgb}{0.54,0.54,0.54}
\definecolor{grey55}{rgb}{0.55,0.55,0.55}
\definecolor{grey56}{rgb}{0.56,0.56,0.56}
\definecolor{grey57}{rgb}{0.57,0.57,0.57}
\definecolor{grey58}{rgb}{0.58,0.58,0.58}
\definecolor{grey59}{rgb}{0.59,0.59,0.59}
\definecolor{grey5}{rgb}{0.05,0.05,0.05}
\definecolor{grey60}{rgb}{0.60,0.60,0.60}
\definecolor{grey61}{rgb}{0.61,0.61,0.61}
\definecolor{grey62}{rgb}{0.62,0.62,0.62}
\definecolor{grey63}{rgb}{0.63,0.63,0.63}
\definecolor{grey64}{rgb}{0.64,0.64,0.64}
\definecolor{grey65}{rgb}{0.65,0.65,0.65}
\definecolor{grey66}{rgb}{0.66,0.66,0.66}
\definecolor{grey67}{rgb}{0.67,0.67,0.67}
\definecolor{grey68}{rgb}{0.68,0.68,0.68}
\definecolor{grey69}{rgb}{0.69,0.69,0.69}
\definecolor{grey6}{rgb}{0.06,0.06,0.06}
\definecolor{grey70}{rgb}{0.70,0.70,0.70}
\definecolor{grey71}{rgb}{0.71,0.71,0.71}
\definecolor{grey72}{rgb}{0.72,0.72,0.72}
\definecolor{grey73}{rgb}{0.73,0.73,0.73}
\definecolor{grey74}{rgb}{0.74,0.74,0.74}
\definecolor{grey75}{rgb}{0.75,0.75,0.75}
\definecolor{grey76}{rgb}{0.76,0.76,0.76}
\definecolor{grey77}{rgb}{0.77,0.77,0.77}
\definecolor{grey78}{rgb}{0.78,0.78,0.78}
\definecolor{grey79}{rgb}{0.79,0.79,0.79}
\definecolor{grey7}{rgb}{0.07,0.07,0.07}
\definecolor{grey80}{rgb}{0.80,0.80,0.80}
\definecolor{grey81}{rgb}{0.81,0.81,0.81}
\definecolor{grey82}{rgb}{0.82,0.82,0.82}
\definecolor{grey83}{rgb}{0.83,0.83,0.83}
\definecolor{grey84}{rgb}{0.84,0.84,0.84}
\definecolor{grey85}{rgb}{0.85,0.85,0.85}
\definecolor{grey86}{rgb}{0.86,0.86,0.86}
\definecolor{grey87}{rgb}{0.87,0.87,0.87}
\definecolor{grey88}{rgb}{0.88,0.88,0.88}
\definecolor{grey89}{rgb}{0.89,0.89,0.89}
\definecolor{grey8}{rgb}{0.08,0.08,0.08}
\definecolor{grey90}{rgb}{0.90,0.90,0.90}
\definecolor{grey91}{rgb}{0.91,0.91,0.91}
\definecolor{grey92}{rgb}{0.92,0.92,0.92}
\definecolor{grey93}{rgb}{0.93,0.93,0.93}
\definecolor{grey94}{rgb}{0.94,0.94,0.94}
\definecolor{grey95}{rgb}{0.95,0.95,0.95}
\definecolor{grey96}{rgb}{0.96,0.96,0.96}
\definecolor{grey97}{rgb}{0.97,0.97,0.97}
\definecolor{grey98}{rgb}{0.98,0.98,0.98}
\definecolor{grey99}{rgb}{0.99,0.99,0.99}
\definecolor{grey9}{rgb}{0.09,0.09,0.09}
\definecolor{grey}{rgb}{0.75,0.75,0.75}
\definecolor{honeydew1}{rgb}{0.94,1.00,0.94}
\definecolor{honeydew2}{rgb}{0.88,0.93,0.88}
\definecolor{honeydew3}{rgb}{0.76,0.80,0.76}
\definecolor{honeydew4}{rgb}{0.51,0.55,0.51}
\definecolor{honeydew}{rgb}{0.94,1.00,0.94}
\definecolor{hotpink}{rgb}{1.00,0.41,0.71}
\definecolor{indianred}{rgb}{0.80,0.36,0.36}
\definecolor{ivory1}{rgb}{1.00,1.00,0.94}
\definecolor{ivory2}{rgb}{0.93,0.93,0.88}
\definecolor{ivory3}{rgb}{0.80,0.80,0.76}
\definecolor{ivory4}{rgb}{0.55,0.55,0.51}
\definecolor{ivory}{rgb}{1.00,1.00,0.94}
\definecolor{khaki1}{rgb}{1.00,0.96,0.56}
\definecolor{khaki2}{rgb}{0.93,0.90,0.52}
\definecolor{khaki3}{rgb}{0.80,0.78,0.45}
\definecolor{khaki4}{rgb}{0.55,0.53,0.31}
\definecolor{khaki}{rgb}{0.94,0.90,0.55}
\definecolor{lavenderblush}{rgb}{1.00,0.94,0.96}
\definecolor{lavender}{rgb}{0.90,0.90,0.98}
\definecolor{lawngreen}{rgb}{0.49,0.99,0.00}
\definecolor{lemonchiffon}{rgb}{1.00,0.98,0.80}
\definecolor{lightblue}{rgb}{0.68,0.85,0.90}
\definecolor{lightcoral}{rgb}{0.94,0.50,0.50}
\definecolor{lightcyan}{rgb}{0.88,1.00,1.00}
\definecolor{lightgoldenrod}{rgb}{0.93,0.87,0.51}
\definecolor{lightgoldenrod}{rgb}{0.98,0.98,0.82}
\definecolor{lightgray}{rgb}{0.83,0.83,0.83}
\definecolor{lightgreen}{rgb}{0.56,0.93,0.56}
\definecolor{lightgrey}{rgb}{0.83,0.83,0.83}
\definecolor{lightpink}{rgb}{1.00,0.71,0.76}
\definecolor{lightsalmon}{rgb}{1.00,0.63,0.48}
\definecolor{lightsea}{rgb}{0.13,0.70,0.67}
\definecolor{lightsky}{rgb}{0.53,0.81,0.98}
\definecolor{lightslate}{rgb}{0.47,0.53,0.60}
\definecolor{lightslate}{rgb}{0.47,0.53,0.60}
\definecolor{lightslate}{rgb}{0.52,0.44,1.00}
\definecolor{lightsteel}{rgb}{0.69,0.77,0.87}
\definecolor{lightyellow}{rgb}{1.00,1.00,0.88}
\definecolor{limegreen}{rgb}{0.20,0.80,0.20}
\definecolor{linen}{rgb}{0.98,0.94,0.90}
\definecolor{magenta1}{rgb}{1.00,0.00,1.00}
\definecolor{magenta2}{rgb}{0.93,0.00,0.93}
\definecolor{magenta3}{rgb}{0.80,0.00,0.80}
\definecolor{magenta4}{rgb}{0.55,0.00,0.55}
\definecolor{magenta}{rgb}{1.00,0.00,1.00}
\definecolor{maroon1}{rgb}{1.00,0.20,0.70}
\definecolor{maroon2}{rgb}{0.93,0.19,0.65}
\definecolor{maroon3}{rgb}{0.80,0.16,0.56}
\definecolor{maroon4}{rgb}{0.55,0.11,0.38}
\definecolor{maroon}{rgb}{0.69,0.19,0.38}
\definecolor{mediumaquamarine}{rgb}{0.40,0.80,0.67}
\definecolor{mediumblue}{rgb}{0.00,0.00,0.80}
\definecolor{mediumorchid}{rgb}{0.73,0.33,0.83}
\definecolor{mediumpurple}{rgb}{0.58,0.44,0.86}
\definecolor{mediumsea}{rgb}{0.24,0.70,0.44}
\definecolor{mediumslate}{rgb}{0.48,0.41,0.93}
\definecolor{mediumspring}{rgb}{0.00,0.98,0.60}
\definecolor{mediumturquoise}{rgb}{0.28,0.82,0.80}
\definecolor{mediumviolet}{rgb}{0.78,0.08,0.52}
\definecolor{midnightblue}{rgb}{0.10,0.10,0.44}
\definecolor{mintcream}{rgb}{0.96,1.00,0.98}
\definecolor{mistyrose}{rgb}{1.00,0.89,0.88}
\definecolor{moccasin}{rgb}{1.00,0.89,0.71}
\definecolor{navajowhite}{rgb}{1.00,0.87,0.68}
\definecolor{navyblue}{rgb}{0.00,0.00,0.50}
\definecolor{navy}{rgb}{0.00,0.00,0.50}
\definecolor{oldlace}{rgb}{0.99,0.96,0.90}
\definecolor{olivedrab}{rgb}{0.42,0.56,0.14}
\definecolor{orange1}{rgb}{1.00,0.65,0.00}
\definecolor{orange2}{rgb}{0.93,0.60,0.00}
\definecolor{orange3}{rgb}{0.80,0.52,0.00}
\definecolor{orange4}{rgb}{0.55,0.35,0.00}
\definecolor{orangered}{rgb}{1.00,0.27,0.00}
\definecolor{orange}{rgb}{1.00,0.65,0.00}
\definecolor{orchid1}{rgb}{1.00,0.51,0.98}
\definecolor{orchid2}{rgb}{0.93,0.48,0.91}
\definecolor{orchid3}{rgb}{0.80,0.41,0.79}
\definecolor{orchid4}{rgb}{0.55,0.28,0.54}
\definecolor{orchid}{rgb}{0.85,0.44,0.84}
\definecolor{palegoldenrod}{rgb}{0.93,0.91,0.67}
\definecolor{palegreen}{rgb}{0.60,0.98,0.60}
\definecolor{paleturquoise}{rgb}{0.69,0.93,0.93}
\definecolor{paleviolet}{rgb}{0.86,0.44,0.58}
\definecolor{papayawhip}{rgb}{1.00,0.94,0.84}
\definecolor{peachpuff}{rgb}{1.00,0.85,0.73}
\definecolor{peru}{rgb}{0.80,0.52,0.25}
\definecolor{pink1}{rgb}{1.00,0.71,0.77}
\definecolor{pink2}{rgb}{0.93,0.66,0.72}
\definecolor{pink3}{rgb}{0.80,0.57,0.62}
\definecolor{pink4}{rgb}{0.55,0.39,0.42}
\definecolor{pink}{rgb}{1.00,0.75,0.80}
\definecolor{plum1}{rgb}{1.00,0.73,1.00}
\definecolor{plum2}{rgb}{0.93,0.68,0.93}
\definecolor{plum3}{rgb}{0.80,0.59,0.80}
\definecolor{plum4}{rgb}{0.55,0.40,0.55}
\definecolor{plum}{rgb}{0.87,0.63,0.87}
\definecolor{powderblue}{rgb}{0.69,0.88,0.90}
\definecolor{purple1}{rgb}{0.61,0.19,1.00}
\definecolor{purple2}{rgb}{0.57,0.17,0.93}
\definecolor{purple3}{rgb}{0.49,0.15,0.80}
\definecolor{purple4}{rgb}{0.33,0.10,0.55}
\definecolor{purple}{rgb}{0.63,0.13,0.94}
\definecolor{red1}{rgb}{1.00,0.00,0.00}
\definecolor{red2}{rgb}{0.93,0.00,0.00}
\definecolor{red3}{rgb}{0.80,0.00,0.00}
\definecolor{red4}{rgb}{0.55,0.00,0.00}
\definecolor{red}{rgb}{1.00,0.00,0.00}
\definecolor{rosybrown}{rgb}{0.74,0.56,0.56}
\definecolor{royalblue}{rgb}{0.25,0.41,0.88}
\definecolor{saddlebrown}{rgb}{0.55,0.27,0.07}
\definecolor{salmon1}{rgb}{1.00,0.55,0.41}
\definecolor{salmon2}{rgb}{0.93,0.51,0.38}
\definecolor{salmon3}{rgb}{0.80,0.44,0.33}
\definecolor{salmon4}{rgb}{0.55,0.30,0.22}
\definecolor{salmon}{rgb}{0.98,0.50,0.45}
\definecolor{sandybrown}{rgb}{0.96,0.64,0.38}
\definecolor{seagreen}{rgb}{0.18,0.55,0.34}
\definecolor{seashell1}{rgb}{1.00,0.96,0.93}
\definecolor{seashell2}{rgb}{0.93,0.90,0.87}
\definecolor{seashell3}{rgb}{0.80,0.77,0.75}
\definecolor{seashell4}{rgb}{0.55,0.53,0.51}
\definecolor{seashell}{rgb}{1.00,0.96,0.93}
\definecolor{sienna1}{rgb}{1.00,0.51,0.28}
\definecolor{sienna2}{rgb}{0.93,0.47,0.26}
\definecolor{sienna3}{rgb}{0.80,0.41,0.22}
\definecolor{sienna4}{rgb}{0.55,0.28,0.15}
\definecolor{sienna}{rgb}{0.63,0.32,0.18}
\definecolor{skyblue}{rgb}{0.53,0.81,0.92}
\definecolor{slateblue}{rgb}{0.42,0.35,0.80}
\definecolor{slategray}{rgb}{0.44,0.50,0.56}
\definecolor{slategrey}{rgb}{0.44,0.50,0.56}
\definecolor{snow1}{rgb}{1.00,0.98,0.98}
\definecolor{snow2}{rgb}{0.93,0.91,0.91}
\definecolor{snow3}{rgb}{0.80,0.79,0.79}
\definecolor{snow4}{rgb}{0.55,0.54,0.54}
\definecolor{snow}{rgb}{1.00,0.98,0.98}
\definecolor{springgreen}{rgb}{0.00,1.00,0.50}
\definecolor{steelblue}{rgb}{0.27,0.51,0.71}
\definecolor{tan1}{rgb}{1.00,0.65,0.31}
\definecolor{tan2}{rgb}{0.93,0.60,0.29}
\definecolor{tan3}{rgb}{0.80,0.52,0.25}
\definecolor{tan4}{rgb}{0.55,0.35,0.17}
\definecolor{tan}{rgb}{0.82,0.71,0.55}
\definecolor{thistle1}{rgb}{1.00,0.88,1.00}
\definecolor{thistle2}{rgb}{0.93,0.82,0.93}
\definecolor{thistle3}{rgb}{0.80,0.71,0.80}
\definecolor{thistle4}{rgb}{0.55,0.48,0.55}
\definecolor{thistle}{rgb}{0.85,0.75,0.85}
\definecolor{tomato1}{rgb}{1.00,0.39,0.28}
\definecolor{tomato2}{rgb}{0.93,0.36,0.26}
\definecolor{tomato3}{rgb}{0.80,0.31,0.22}
\definecolor{tomato4}{rgb}{0.55,0.21,0.15}
\definecolor{tomato}{rgb}{1.00,0.39,0.28}
\definecolor{turquoise1}{rgb}{0.00,0.96,1.00}
\definecolor{turquoise2}{rgb}{0.00,0.90,0.93}
\definecolor{turquoise3}{rgb}{0.00,0.77,0.80}
\definecolor{turquoise4}{rgb}{0.00,0.53,0.55}
\definecolor{turquoise}{rgb}{0.25,0.88,0.82}
\definecolor{violetred}{rgb}{0.82,0.13,0.56}
\definecolor{violet}{rgb}{0.93,0.51,0.93}
\definecolor{wheat1}{rgb}{1.00,0.91,0.73}
\definecolor{wheat2}{rgb}{0.93,0.85,0.68}
\definecolor{wheat3}{rgb}{0.80,0.73,0.59}
\definecolor{wheat4}{rgb}{0.55,0.49,0.40}
\definecolor{wheat}{rgb}{0.96,0.87,0.70}
\definecolor{whitesmoke}{rgb}{0.96,0.96,0.96}
\definecolor{white}{rgb}{1.00,1.00,1.00}
\definecolor{yellow1}{rgb}{1.00,1.00,0.00}
\definecolor{yellow2}{rgb}{0.93,0.93,0.00}
\definecolor{yellow3}{rgb}{0.80,0.80,0.00}
\definecolor{yellow4}{rgb}{0.55,0.55,0.00}
\definecolor{yellowgreen}{rgb}{0.60,0.80,0.20}
\definecolor{yellow}{rgb}{1.00,1.00,0.00}

\usepackage[colorinlistoftodos]{todonotes}


 \pagestyle{myheadings}
\numberwithin{equation}{section}
\newtheorem{thm}{Theorem}[section]
\newtheorem{cor}[thm]{Corollary}
\newtheorem{lem}[thm]{Lemma}
\newtheorem{prop}[thm]{Proposition}

\theoremstyle{definition}
\newtheorem{defn}[thm]{Definition}



\newcommand{\lxr}{\longrightarrow}

\newcommand{\aGP}{\mathscr GP}



\newsavebox{\proofbox}
\savebox{\proofbox}{\begin{picture}(7,7)%
  \put(0,0){\framebox(7,7){}}\end{picture}}



\begin{document}

\title[]{From Gorenstein derived equivalences to stable functors of Gorenstein projective modules}
\author[]{Nan Gao$^{*}$, Chi-Heng Zhang, Jing Ma
}
\address{Department of Mathematics, Shanghai University, Shanghai 200444, PR China}
\thanks{* is the corresponding author.}
\email{nangao@shu.edu.cn, zchmath@shu.edu.cn, majingmmmm@shu.edu.cn  }
\thanks{Supported by the National Natural Science Foundation of China (Grant No.11771272 and 11871326).}

\date{\today}

\keywords{Gorenstein derived categories; Gorenstein derived equivalences; Gorenstein stable categories; Gorenstein stable equivalences}

\subjclass[2020]{18G20, 16G10.}

\begin{abstract}\ In the paper, we mainly connect the Gorenstein derived equivalence and stable functors of Gorenstein projective modules.
Specially, we prove that a Gorenstein derived equivalence between CM-finite algebras $A$ and $B$ can induce a stable
functor between the factor categories $A\mbox{-}{\rm mod}/A\mbox{-}{\rm Gproj}$ and $B\mbox{-}{\rm mod}/B\mbox{-}{\rm Gproj}$.
Furthermore, the above stable functor is an equivalence when $A$ and $B$ are Gorenstein.
\end{abstract}

\maketitle


\vskip 20pt

\section{\bf Introduction}

\vskip 5pt

Gorenstein derived categories and Gorenstein derived equivalences were introduced by Gao and Zhang \cite{GZ} have some advantages in the study
of Gorenstein homological algebras. A Gorenstein derived equivalence  is a triangle equivalence between the Gorenstein derived
categories over Artin algebras (see \cite{GZ}). For Gorenstein derived equivalent Artin algebras, it is hard to directly compare the modules over them, since a Gorenstein derived equivalence typically takes modules of one algebra to complexes over the other algebra. It is a well-known result of Rickard \cite{R} which says that a  derived equivalence between two selfinjective algebras always induces a stable
equivalence of Morita type. Recently, Hu and Pan \cite{HP} proved that a derived equivalence induces a stable equivalence of Gorenstein projective objects, where the nonnegative functor and the uniformly bounded functor between derived categories are introduced.

\vskip 5pt

Let $A$ be an Artin algebra and $A\mbox{-}{\rm mod}$ the category of finitely generated $A$-modules.
Let $A\mbox{-}{\rm Gproj}$  be the full subcategory consisting of Gorenstein projective $A$-modules.
The Gorenstein stable category of $A$, denoted by $A\mbox{-}{\rm mod}/A\mbox{-}{\rm Gproj}$, is defined to be
the additive quotient, where the objects are the same as those in $A\mbox{-}{\rm mod}$ and the morphism
space ${\rm Hom}_{A\mbox{-}{\rm mod}/A\mbox{-}{\rm Gproj}}(X, Y)$ is the quotient space of ${\rm Hom}_{A}(X, Y)$
modulo all morphisms factorizing through Gorenstein projective $A$-modules. Two algebras $A$ and $B$ are Gorenstein
stably equivalent if there is an additive equivalence between $A\mbox{-}{\rm mod}/A\mbox{-}{\rm Gproj}$
and $B\mbox{-}{\rm mod}/B\mbox{-}{\rm Gproj}$.

\vskip 5pt

For an arbitrary Gorenstein derived equivalence $F$ between Artin algebras, there is a basic question
when $F$ can induce the Gorenstein stable functor $\overline{F}$ between the corresponding Gorenstein stable categories.

 \vskip 5pt

 In this paper, we consider the nonnegative functor and the uniformly bounded functor between Gorenstein derived categories, respectively.
 We show that two kinds of functors induce the stable functor between the corresponding Gorenstein stable categories (see Proposition~\ref{uniformly}). We prove that  a Gorenstein derived equivalence is both the nonnegative functor and the uniformly bounded functor, and consequently, it can induce a
 Gorenstein stable equivalence (see Theorem~\ref{Gorenstein stable}).

\vskip 5pt

In the following, we recall the basic notions which is necessary in the paper.

\vskip 5pt

From \cite{EJ1, EJ2}, let $A$ be an algebra, $G \in A\mbox{-}{\rm mod}$ is called {\bf Gorenstein projective} if there exists an exact complex $$\dots \lxr P^{-1} \lxr P^{0} \xrightarrow{d^{0}} P^{1} \lxr \dots$$ of projective modules, which stays exact after applying ${\rm Hom}_{A}(-, P)$ for all projective $A$-module $P$, with $G \cong {\rm Ker}d^{0}$.

\vskip 5pt

Denote by $A\mbox{-} \rm{Gproj}$ the full subcategory of $A\mbox{-} {\rm mod}$ consists of all finitely generated Gorenstein projective modules, and $K^{b}(A\mbox{-} \rm{Gproj})$ the full subcategory of $K^{b}(A\mbox{-} {\rm mod})$ consists of all complexes $P^{\bullet}$ with $P^{i} \in A\mbox{-} {\rm Gproj}$ for all $i \in \mathbb{Z}$.

\vskip 5pt

Recall from \cite{BR} and \cite{B} that an algebra $A$ is said to be {\bf CM-finite} if there are only finite indecomposable Gorenstein projective modules in $A\mbox{-}{\rm mod}$, which are $G_{1}, G_{2} ,...,G_{n}$, up to isomorphism. Denote by $G_{A}= \oplus_{i=1}^{n} G_{i}$ the Gorenstein projective generator in $A\mbox{-}{\rm mod}$, and $\aGP(A):=(\rm{End}_{A}(G))^{op}$. Recall from \cite{H} that an Artin algebra $A$ is {\bf Gorenstein} if ${\rm inj.dim} {_{A}A}<\infty$ and ${\rm inj.dim} A_{A}<\infty$.

\vskip 5pt

A complex $X^{\bullet} \in C^{b}(A\mbox{-}{\rm mod})$ is called $G\mbox{-}$acyclic, if ${\rm Hom}_{A}(G, X^{\bullet})$ is acyclic for each $G \in A\mbox{-}{\rm Gproj}$. For a complex $X^{\bullet}\in K^{b}(A\mbox{-} {\rm mod})$, \ $G_{X^{\bullet}} \in K^{-}(A\mbox{-}{\rm Gproj})$ is called a Gorenstein projective resolution of $X^{\bullet}$, \ if there exists a $G\mbox{-}$quasi-isomorphism $f^{\bullet}: G_{X^{\bullet}} \lxr X^{\bullet}$.

\vskip 5pt

Put
$$K^{*}_{gpac}(A\mbox{-}{\rm mod}):=\{X^{\bullet}\in K^{*}(A\mbox{-}{\rm mod})|X^{\bullet} \  is \ G\mbox{-}acyclic\}$$
with $* \in\{blank, -, b\}$.
Then $K^{*}_{gpac}(A\mbox{-}{\rm mod})$ is a thick triangulated subcategory of $K^{*}(A\mbox{-}{\rm mod})$. Following \cite{GZ}, the Verdier quotient
$$D_{gp}^{*}(A\mbox{-}{\rm mod}):= K^{*}(A\mbox{-}{\rm mod})/K^{*}_{gpac}(A\mbox{-}{\rm mod}),$$ which is called the {\bf Gorenstein derived category}. Recall from \cite{GZ} that  two algebras $A$ and $B$ are {\bf Gorenstein derived equivalent}, if there exists a triangle equivalence between $D_{gp}^{b}(A\mbox{-}{\rm mod})$ and $D_{gp}^{b}(B\mbox{-}{\rm mod})$.

\vskip 5pt

Let $A$ and $B$ be CM-finite Gorenstein Artin algebras. Recall from \cite[Remark]{GZ} that $A$ and $B$ are Gorenstein derived equivalent
if and only if there exists a complex $E^{\bullet}\in K^{b}(A\mbox{-}{\rm Gproj})$ such that (1)\ $\aGP(A)\cong {\rm End}_{A}(E^{\bullet})$;
(2)\ ${\rm Hom}_{K^{b}(A\mbox{-}{\rm Gproj})}(E^{\bullet},$ $ E^{\bullet}[i])=0$, $\forall i\neq 0$; and (3)\ ${\rm add}(E^{\bullet})$ generates $K^{b}(A\mbox{-}{\rm Gproj})$ as a triangulated category, where ${\rm add}(E^{\bullet})$ is the full subcategory of $K^{b}(A\mbox{-}{\rm Gproj})$ consisting of direct summands of finite direct sums of $E^{\bullet}$.  In the following, we call $E^{\bullet}$ the Gorenstein silting complex.

\vskip 10pt

\section{\bf Gorenstein stable equivalences}

\vskip 5pt

In this section, we  prove that  a Gorenstein derived equivalence of two CM-finite algebras is both the nonnegative functor and the uniformly bounded functor, and consequently, it can induce a  Gorenstein stable equivalence when above algebras are Gorenstein.

\vskip 10pt

\begin{lem}\
\label{0ncomplex}
Let $A$ and $B$ be two CM-finite algebras such that $G_{A}$ and $G_{B}$ are the Gorenstein projective generators of $A$ and $B$, respectively. Assume that $F: D_{gp}^{b}(A\mbox{-}{\rm mod})\lxr D_{gp}^{b}(B\mbox{-}{\rm mod})$ is the Gorenstein derived equivalence. Then $F(G_{A})$ is isomorphic in $D_{gp}^{b}(B\mbox{-}{\rm mod})$ to a complex $\bar{T}^{\bullet}\in K^{b}(B\mbox{-}{\rm Gproj})$ of the form
$$0\lxr \bar{T}^{0}\lxr \bar{T}^{1}\lxr \cdots\lxr \bar{T}^{n}\lxr 0$$
for some $n\geq 0$ if and only if $F^{-1}(G_{B})$ is isomorphic in $D_{gp}^{b}(A\mbox{-}{\rm mod})$ to a complex $T^{\bullet}\in K^{b}(A\mbox{-}{\rm Gproj})$ of the form
$$0\lxr T^{-n}\lxr \cdots\lxr T^{-1}\lxr T^{0}\lxr 0.$$
\end{lem}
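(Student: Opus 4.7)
The plan is to translate the degree constraints on $\bar{T}^{\bullet}$ into degree constraints on $F^{-1}(G_{B})$ using the triangle equivalence $F$ together with a Hom-vanishing computation. Since $F$ is an equivalence, for each $i\in\mathbb{Z}$ one has
\[
{\rm Hom}_{D_{gp}^{b}(A)}(F^{-1}(G_{B}),\, G_{A}[i])\ \cong\ {\rm Hom}_{D_{gp}^{b}(B)}(G_{B},\, \bar{T}^{\bullet}[i]).
\]
Because $\bar{T}^{\bullet}$ has its terms in $B\mbox{-}{\rm Gproj}$ only in degrees $0,1,\ldots,n$, and because the canonical functor $K^{b}(B\mbox{-}{\rm Gproj})\lxr D_{gp}^{b}(B\mbox{-}{\rm mod})$ is fully faithful, a direct chain-map computation yields that the right-hand side vanishes for every $i\notin[0,n]$: a chain map $G_{B}\lxr \bar{T}^{\bullet}[i]$ is determined by a map into $(\bar{T}^{\bullet}[i])^{0}=\bar{T}^{i}$, which is zero outside $[0,n]$.

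Next, I would argue that $F^{-1}(G_{B})$ is isomorphic in $D_{gp}^{b}(A\mbox{-}{\rm mod})$ to some $T^{\bullet}\in K^{b}(A\mbox{-}{\rm Gproj})$. Since $A$ and $B$ are CM-finite, the full subcategories $K^{b}(A\mbox{-}{\rm Gproj})$ and $K^{b}(B\mbox{-}{\rm Gproj})$ are the triangulated subcategories generated by ${\rm add}(G_{A})$ and ${\rm add}(G_{B})$, respectively; applying $F$ identifies the first with the triangulated closure of ${\rm add}(\bar{T}^{\bullet})$, which is contained in $K^{b}(B\mbox{-}{\rm Gproj})$, and matching generators on the $B$-side places $G_{B}$ in this closure. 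By Krull--Schmidt for bounded complexes over $A\mbox{-}{\rm Gproj}$, I then replace $T^{\bullet}$ by a minimal representative, having no direct summand of the form $\cdots 0\to P\xrightarrow{{\rm id}} P\to 0\cdots$.

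Write $[a,b]$ for the support of the minimal $T^{\bullet}$. The projection $T^{\bullet}\lxr T^{b}[-b]$ is a chain map which is not null-homotopic, since $T^{b+1}=0$ leaves no source for a homotopy to annihilate the identity at degree $b$; as $T^{b}\in{\rm add}(G_{A})$, the Hom-vanishing above forces $-b\in[0,n]$, so $b\leq 0$. Dually, the projection $T^{\bullet}\lxr T^{a}[-a]$ is not null-homotopic by minimality: any such null-homotopy would yield a retraction of the leftmost differential $d^{a}_{T}$, impossible since in a minimal bounded complex over $A\mbox{-}{\rm Gproj}$ the differential has image in the radical. Thus $-a\in[0,n]$, i.e., $a\geq -n$, so $T^{\bullet}$ lies in degrees $[-n,0]$. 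The converse direction is symmetric, with $F$ in place of $F^{-1}$ and the top and bottom bounds exchanged. The main obstacle is the lower-degree bound, which requires the minimality reduction via Krull--Schmidt; the upper-degree bound is automatic, as there is nowhere for a null-homotopy to land beyond the top term.
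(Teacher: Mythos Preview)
Your approach via minimality and Krull--Schmidt is a reasonable alternative to the paper's splitting argument, but there is a concrete error in your upper-bound step. The map you call the ``projection $T^\bullet \to T^b[-b]$'' is \emph{not} a chain map: with $f^b={\rm id}_{T^b}$ and $f^j=0$ otherwise, the chain-map condition in degree $b-1$ reads $0 = {\rm id}_{T^b}\circ d_T^{\,b-1}$, which would force $d_T^{\,b-1}=0$ --- false in general. (Only the projection onto the \emph{bottom} term $T^\bullet \to T^a[-a]$ is automatically a chain map, and your lower-bound argument using it is fine.) Consequently the single Hom-vanishing you computed, ${\rm Hom}_{D_{gp}^b(A)}(T^\bullet, G_A[i])=0$ for $i\notin[0,n]$, does not by itself bound the top degree of $T^\bullet$.

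The fix --- and this is essentially what the paper does --- is to use the \emph{other} Hom as well: applying $F$ gives
\[
{\rm Hom}_{D_{gp}^b(A)}(G_A, T^\bullet[i]) \;\cong\; {\rm Hom}_{D_{gp}^b(B)}(\bar T^\bullet, G_B[i]),
\]
which vanishes for $i>0$ since $\bar T^\bullet$ sits in degrees $[0,n]$. In your language, the \emph{inclusion} $T^b[-b]\hookrightarrow T^\bullet$ is the correct non-null-homotopic chain map at the top (a null-homotopy would make $d_T^{\,b-1}$ split epi, contradicting minimality), and it lives in ${\rm Hom}(T^b, T^\bullet[b])$; the vanishing just displayed then forces $b\le 0$. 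The paper phrases this instead as: $H^i{\rm Hom}_A(E,T^\bullet)=0$ for $i>0$ and all $E\in A\mbox{-}{\rm Gproj}$, so $T^\bullet$ splits in positive degrees and may be truncated there, and then obtains the lower bound from your Hom via the analogous splitting below degree $-n$. Either packaging works, but both genuinely require the two Hom-vanishings, one in each variable.
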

\begin{proof}\ $(\Longrightarrow)$\ Note that
$${\rm Hom}_{K^{b}(A\mbox{-}{\rm Gproj})}(G_{A}, \ T^{\bullet}[i])\xrightarrow{\cong} {\rm Hom}_{K^{b}(B\mbox{-}{\rm Gproj})}(\bar{T}^{\bullet}, \ G_{B}[i])=0$$
for all $i>0$. Then $H^{n}{\rm Hom}_{A}(E, T^{\bullet})=0$ for all $n>0$ and all $E\in A\mbox{-}{\rm Gproj}$. This means that $T^{\bullet}$ splits in all positive degrees, i.e. the exact sequence $$0 \lxr {\rm Ker}(T^{n}\lxr T^{n+1}) \lxr T^{n} \lxr {\rm Im}(T^{n} \lxr T^{n+1}) \lxr 0$$ splits for all positive $n$. Thus we may assume that $T^{i}=0$ for all $i>0$.

\vskip 5pt

To prove that $T^{\bullet}$ is isomorphic to a complex in $K^{b}(A\mbox{-}{\rm Gproj})$ with zero terms in all degrees $<-n$, it suffices to show that ${\rm Hom}_{K^{b}(A\mbox{-}{\rm Gproj})}(T^{\bullet}, E[i])=0$ for all $i>n$ and all $E$ in $A\mbox{-}{\rm Gproj}$. Note that $F(E)\in {\rm add}(\bar{T}^{\bullet})$, which is the smallest full subcategory of $K^{b}(B\mbox{-}{\rm Gproj})$ closed under finite direct sums and direct summands, since $\bar{T}^{\bullet}\cong F(G_{A})$ and $G_{A}$ is the Gorenstein projective generator of $A$. Then we deduce that
$${\rm Hom}_{K^{b}(A\mbox{-}{\rm Gproj})}(T^{\bullet}, \ E[i])\cong {\rm Hom}_{K^{b}(B\mbox{-}{\rm Gproj})}(G_{B}, \ F(E)[i])$$
for all $i>n$.

\vskip 5pt

$(\Longleftarrow)$\ The proof is similar to the above.
\end{proof}

\vskip 10pt

Let $A$ be an Artin algebra, let $Q: K(A\mbox{-}{\rm mod})\lxr D_{gp}(A\mbox{-}{\rm mod})$ be the Verdier quotient functor. Consider the induced map
$$Q_{(X^{\bullet}, \ Y^{\bullet})}: {\rm Hom}_{K(A\mbox{-}{\rm mod})}(X^{\bullet}, \ Y^{\bullet})\lxr {\rm Hom}_{D_{gp}(A\mbox{-}{\rm mod})}(X^{\bullet}, \ Y^{\bullet}).$$

Define

$$\mathcal{U}_{Y^{\bullet}}:=\{X^{\bullet}\in K(A\mbox{-}{\rm mod})\mid Q_{(X^{\bullet}, \ Y^{\bullet}[i])}\ is\ isomorphic\ for\ i\leq 0, $$
$$and\ is \ monic\ for\ i=1\},$$
and for the full subcategory $\mathcal{X}$ of $D_{gp}(A\mbox{-}{\rm mod})$,
$$^{\perp_{G}}\mathcal{X}:= \{Z^{\bullet}\in D_{gp}(A\mbox{-}{\rm mod})\mid {\rm Hom}_{D_{gp}(A\mbox{-}{\rm mod})}(Z^{\bullet}, X^{\bullet}[i])=0 \  for \ i>0 \ and \ X^{\bullet} \in \mathcal{X}\}.$$ $\mathcal{X}^{\perp_{G}}$ is defined dually. Note that for full subcategories $\mathcal{X}$, $\mathcal{Y}$ of triangulated category $\mathcal{C}$, denote by $$\mathcal{X}*\mathcal{Y}:=\{Z \in \mathcal{C}|X \lxr Z \lxr Y \lxr X[1]\ is \ a \ triangle \ in \ \mathcal{C} \ with \ X \in \mathcal{X} \ and \ Y \in \mathcal{Y} \}$$

\vskip 10pt

\begin{lem}
\label{stalk}
Let $A$ be an Artin algebra. Take $X\in A\mbox{-}{\rm mod}$ and a bounded below complex $Y^{\bullet}$ over $A\mbox{-}{\rm mod}$. Suppose that $Y^{i}\in X^{\perp_G}$ for all $i<m$. Then $X[i] \in \mathcal{U}_{Y^{\bullet}}$ for all $i\geq -m$.
\begin{proof}
For $i \geq m$, we have $-m \geq -i$, and $X[-m] \in \mathcal{U}_{Y^i[-i]}$. For  $i<m$, since $Y^i \in X^{\perp_{G}}$, we have $X[-m] \in \mathcal{U}_{Y^i[-i]}$. It follows that $X[-m] \in \mathcal{U}_{Y^i[-i]}$ for all $i \in \mathbb{Z}$. Note that there is some integer $n < m$ such that $Y^i = 0$ for all $i < n$, since $Y^\bullet$ is bounded below. Then $\sigma_{\leq m+1}Y^\bullet$, the left brutal truncation of $Y^\bullet$ at degree m+1, is in $\{Y^{m+1}[-m-1]\} *\ldots *\{Y^n[-n]\}$,  and thus $X[-m] \in \mathcal{U}_{\sigma_{\leq m+1}Y^\bullet}$.

\vskip 5pt

Now it is clear for all $i \leq 1$ that
$${\rm Hom}_{K(A\mbox{-}{\rm mod})}(X[-m], \ (\sigma_{>m+1}Y^\bullet)[i])=0$$
and so $Q_{(X[-m], \ (\sigma_{>m+1}Y^\bullet)[i])}$ is an isomorphism for all $i \leq 1$. This establishes that $X[-m] \in \mathcal{U}_{\sigma _{> m+1}Y^\bullet}$. Since $Y^\bullet$ is in $\{\sigma_{>m+1}Y^\bullet\}*\{\sigma_{\leq m+1}Y^\bullet\}$, we deduce that $X[-m] \in \mathcal{U}_{Y^\bullet}$. Therefore, $X[i] \in \mathcal{U}_{Y^\bullet}$ for all $i\geq -m$.
\end{proof}
\end{lem}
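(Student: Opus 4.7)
The plan is to reduce everything to the single claim $X[-m] \in \mathcal{U}_{Y^{\bullet}}$, and then to recover the general statement $X[i] \in \mathcal{U}_{Y^{\bullet}}$ for every $i \geq -m$ by rewriting $X[i] = X[-m][i+m]$. Since $i+m \geq 0$, shifting the first argument to the right only decreases the index of $Y^{\bullet}$ in the test Hom-spaces, which moves us deeper into the range where $Q$ is already an isomorphism; the defining iso/monic conditions of $\mathcal{U}_{Y^{\bullet}}$ are therefore automatically inherited.

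The pivotal technical fact I would establish first is that the assignment $Z^{\bullet} \mapsto \mathcal{U}_{Z^{\bullet}}$ is closed under the extension operation $*$ in the subscript: given a triangle $A^{\bullet} \lxr Z^{\bullet} \lxr B^{\bullet} \lxr A^{\bullet}[1]$, applying the five-lemma simultaneously to the long exact Hom-sequences in $K(A\mbox{-}{\rm mod})$ and $D_{gp}(A\mbox{-}{\rm mod})$ shows that $X[-m] \in \mathcal{U}_{A^{\bullet}} \cap \mathcal{U}_{B^{\bullet}}$ forces $X[-m] \in \mathcal{U}_{Z^{\bullet}}$.

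Next I would verify the stalk case: $X[-m] \in \mathcal{U}_{Y^{i}[-i]}$ for every $i \in \mathbb{Z}$. For $i \geq m$ the relevant Hom-groups vanish on both the $K$- and the $D_{gp}$-sides for purely degree-theoretic reasons (the residual shift $m-i+j$ lies in the vanishing range when $j \leq 1$), so $Q$ is trivially an iso or mono. For $i < m$ the hypothesis $Y^{i} \in X^{\perp_{G}}$ supplies the needed vanishing on the $D_{gp}$-side, while the $K$-side vanishes in the same range by an elementary Hom-computation between stalk complexes. Since $Y^{\bullet}$ is bounded below, say $Y^{i} = 0$ for $i < n$, the brutal truncation $\sigma_{\leq m+1}Y^{\bullet}$ is concentrated in degrees $[n, m+1]$ and sits inside the iterated extension $\{Y^{m+1}[-m-1]\} * \cdots * \{Y^{n}[-n]\}$; the extension-closure of $\mathcal{U}_{?}$ then yields $X[-m] \in \mathcal{U}_{\sigma_{\leq m+1}Y^{\bullet}}$.

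For the complementary truncation $\sigma_{> m+1}Y^{\bullet}$ a direct computation shows ${\rm Hom}_{K(A\mbox{-}{\rm mod})}(X[-m], (\sigma_{>m+1}Y^{\bullet})[i]) = 0$ for all $i \leq 1$, since $\sigma_{>m+1}Y^{\bullet}$ is concentrated in degrees $>m+1$ while $X[-m]$ lives in degree $m$; hence $Q_{(X[-m],\, (\sigma_{>m+1}Y^{\bullet})[i])}$ is trivially an isomorphism and $X[-m] \in \mathcal{U}_{\sigma_{>m+1}Y^{\bullet}}$. Combining the two halves via the brutal-truncation triangle $\sigma_{>m+1}Y^{\bullet} \lxr Y^{\bullet} \lxr \sigma_{\leq m+1}Y^{\bullet} \lxr \sigma_{>m+1}Y^{\bullet}[1]$ and once more invoking extension-closure yields $X[-m] \in \mathcal{U}_{Y^{\bullet}}$, which together with the initial reduction gives the full statement. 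The main obstacle I expect is the stalk verification, because one must juggle the three shifts $-m$, $-i$, and $j$ and apply $Y^{i} \in X^{\perp_{G}}$ only in the regime $i < m$ while producing trivial vanishing in the complementary regime $i \geq m$; the rest of the argument is a formal consequence of this together with the extension-closure of $\mathcal{U}_{?}$.
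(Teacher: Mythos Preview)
Your proposal is correct and follows essentially the same route as the paper's proof: verify the stalk case $X[-m]\in\mathcal{U}_{Y^{i}[-i]}$ for every $i$, use extension-closure of $\mathcal{U}_{?}$ (which the paper uses tacitly and you justify via the five-lemma) to pass to the brutal truncation $\sigma_{\leq m+1}Y^{\bullet}$, handle $\sigma_{>m+1}Y^{\bullet}$ by the degree argument, and glue along the truncation triangle before shifting to reach all $i\geq -m$. The only cosmetic difference is that you spell out the extension-closure step and the final shift reduction more explicitly than the paper does.
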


\vskip 10pt

\begin{prop}\
\label{key}
Let $A$ be an Artin algebra and $X^{\bullet}$ and $Y^{\bullet}$  bounded above and bounded below complexes over $A\mbox{-}{\rm mod}$, respectively. Suppose that $X^{i}\in {^{\perp_{G}}Y^{j}}$ for all integers $j<i$. Then $X^{\bullet}\in \mathcal{U}_{Y^{\bullet}}$.
\begin{proof}
We have $X^i[-i] \in \mathcal{U}_{Y^\bullet}$ for all $i \in \mathbb{Z}$ by Lemma~\ref{stalk}. Note that there is an integer $n$ such that $X^i=0$ for all $i > n$, since $X^\bullet$ is bounded above. Thus for each integer $m<n$, the complex $\sigma_{\geq m}X^\bullet$ belongs to $\{X^n[-n]\}*\ldots*\{X^m[-m]\}$, and is consequently in $\mathcal{U}_{Y^\bullet}$. Taking $m$ to be sufficiently small such that $Y^j = 0$ for all $j < m+1$. Then for each integer $i\leq 1$, both ${\rm Hom}_{K(A)}(\sigma_{<m}X^\bullet, \ Y^\bullet[i])$ and ${\rm Hom}_{D_{gp}(A)}(\sigma_{<m}X^\bullet, \ Y^\bullet[i])$ vanish. Hence $Q_{(\sigma_{<m}X^\bullet, \ Y^\bullet[i])}$ is an isomorphism for all $i\leq 1$, and consequently $\sigma_{<m}X^\bullet \in \mathcal{U}_{Y^\bullet}$. Note that $X^\bullet \in \{\sigma_{\geq m} X^\bullet\}*\{\sigma_{<m}X^\bullet\}$. It follows that $X^\bullet \in \mathcal{U}_{Y^\bullet}$.
\end{proof}
\end{prop}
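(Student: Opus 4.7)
The plan is to reduce the problem to the stalk case handled by Lemma~\ref{stalk} and then reassemble $X^\bullet$ out of its stalks by brutal truncation. For an integer $m$ split $X^\bullet$ into its upper and lower brutal truncations $\sigma_{\geq m}X^\bullet$ and $\sigma_{<m}X^\bullet$; these fit into the canonical triangle $\sigma_{\geq m}X^\bullet\lxr X^\bullet\lxr \sigma_{<m}X^\bullet\lxr (\sigma_{\geq m}X^\bullet)[1]$ in $K(A\mbox{-}{\rm mod})$, so the task reduces to showing each truncation lies in $\mathcal{U}_{Y^\bullet}$, together with an extension-closure statement for $\mathcal{U}_{Y^\bullet}$.

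For the upper piece, the hypothesis $X^{i}\in {^{\perp_{G}}Y^{j}}$ for $j<i$ is exactly the input that Lemma~\ref{stalk} needs (applied with the module $X^i$ and threshold $m=i$), so every stalk $X^i[-i]$ lies in $\mathcal{U}_{Y^\bullet}$. Since $X^\bullet$ is bounded above, say $X^i=0$ for $i>n$, the complex $\sigma_{\geq m}X^\bullet$ belongs to the iterated extension class $\{X^n[-n]\}*\cdots*\{X^m[-m]\}$ inside $K(A\mbox{-}{\rm mod})$. Granted the (routine) extension-closure of $\mathcal{U}_{Y^\bullet}$, this gives $\sigma_{\geq m}X^\bullet \in \mathcal{U}_{Y^\bullet}$ for every $m$.

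For the lower piece, choose $m$ small enough that $Y^j=0$ for all $j<m+1$. Then $\sigma_{<m}X^\bullet$ is concentrated in degrees strictly below $m$ while $Y^\bullet[i]$ is concentrated in degrees at least $m+1-i$, and a direct degree count forces both ${\rm Hom}_{K(A\mbox{-}{\rm mod})}(\sigma_{<m}X^\bullet,\, Y^\bullet[i])$ and ${\rm Hom}_{D_{gp}(A\mbox{-}{\rm mod})}(\sigma_{<m}X^\bullet,\, Y^\bullet[i])$ to vanish for every $i\leq 1$. Hence $Q_{(\sigma_{<m}X^\bullet,\, Y^\bullet[i])}$ is trivially an isomorphism in that range, and $\sigma_{<m}X^\bullet \in \mathcal{U}_{Y^\bullet}$. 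Applying extension-closure once more to the truncation triangle then yields $X^\bullet \in \mathcal{U}_{Y^\bullet}$.

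The main technical obstacle I anticipate is verifying that $\mathcal{U}_{Y^\bullet}$ is closed under extensions. Given a triangle $L\lxr M\lxr N\lxr L[1]$ with $L,N\in \mathcal{U}_{Y^\bullet}$, one compares the two long exact sequences coming from ${\rm Hom}_{K(A\mbox{-}{\rm mod})}(-,\, Y^\bullet[i])$ and ${\rm Hom}_{D_{gp}(A\mbox{-}{\rm mod})}(-,\, Y^\bullet[i])$, linked by the natural transformation $Q$, and applies a five-lemma argument. A small amount of care is required at the boundary index $i=1$, where only monicness (rather than an isomorphism) is assumed, but this is a standard diagram chase; everything else is formal from Lemma~\ref{stalk} and the triangulated structure.
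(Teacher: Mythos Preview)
Your proposal is correct and follows essentially the same route as the paper: invoke Lemma~\ref{stalk} to place each stalk $X^i[-i]$ in $\mathcal{U}_{Y^\bullet}$, build $\sigma_{\geq m}X^\bullet$ as an iterated extension of these stalks, handle $\sigma_{<m}X^\bullet$ by a degree-vanishing argument once $m$ is chosen below the support of $Y^\bullet$, and conclude via the truncation triangle. The paper, like you, tacitly relies on the extension-closure of $\mathcal{U}_{Y^\bullet}$ without writing out the five-lemma verification you sketch in your last paragraph; your version is in fact slightly more explicit on this point.
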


\vskip 10pt

\begin{cor}\
\label{factors}
Let $A$ be an Artin algebra and  let $f: X\lxr Y$ be a homomorphism in $A\mbox{-}{\rm mod}$. Suppose that $Z^{\bullet}$ is a bounded complex over $A\mbox{-}{\rm mod}$ such that $Z^{i}\in X^{\perp_{G}}$ for all $i<0$ and that $Z^{i}\in {^{\perp_{G}}Y}$ for all $i>0$. If $f$ factors through $Z^{\bullet}$ in $D_{gp}^{b}(A\mbox{-}{\rm mod})$, then $f$ factors through $Z^{0}$ in $A\mbox{-}{\rm mod}$.
\begin{proof}
Suppose that $f=gh$ for $g \in {\rm Hom}_{D^{b}_{gp}(A)}(X, Z^{\bullet})$ and $h \in {\rm Hom}_{D^{b}_{gp}(A)}(Z^{\bullet}, Y)$. By Proposition~\ref{key}, both $g$ and $h$ can be presented by a chain map. Namely, $g=g^{\bullet}$ and $h=h^{\bullet}$ in $D^{b}_{gp}(A)$ for some chain maps $g^{\bullet}:X \rightarrow Z^{\bullet}$ and $h^{\bullet}: Z^{\bullet} \rightarrow Y$. Hence $f=g^{\bullet}h^\bullet=g^0 h^0$ in  $D^{b}_{gp}(A)$, and consequently, $f=g^0 h^0$ since $A \hookrightarrow D^{b}_{gp}(A)$ is a fully faithful embedding.
\end{proof}
\end{cor}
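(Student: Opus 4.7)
The plan is to apply Proposition~\ref{key} twice in order to lift the two morphisms witnessing the factorization in $D_{gp}^{b}(A\mbox{-}{\rm mod})$ to honest chain maps, after which a factorization through $Z^{0}$ at the module level will be essentially immediate.

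First, I would write $f=hg$ with $g\in{\rm Hom}_{D_{gp}^{b}(A\mbox{-}{\rm mod})}(X,Z^{\bullet})$ and $h\in{\rm Hom}_{D_{gp}^{b}(A\mbox{-}{\rm mod})}(Z^{\bullet},Y)$, regarding $X$ and $Y$ as stalk complexes concentrated in degree $0$. To represent $g$ by a chain map I would apply Proposition~\ref{key} with the bounded above complex $X$ and the bounded below complex $Z^{\bullet}$: the only nontrivial orthogonality condition to check, namely $X\in {}^{\perp_{G}}Z^{j}$ for $j<0$, is exactly the hypothesis $Z^{j}\in X^{\perp_{G}}$ for $j<0$. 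This yields $X\in\mathcal{U}_{Z^{\bullet}}$, so $Q_{(X,Z^{\bullet})}$ is a bijection and $g$ is represented by a genuine chain map $g^{\bullet}:X\to Z^{\bullet}$. A symmetric application of Proposition~\ref{key} with $X^{\bullet}=Z^{\bullet}$ and $Y^{\bullet}=Y$, invoking $Z^{i}\in {}^{\perp_{G}}Y$ for $i>0$, produces a chain map $h^{\bullet}:Z^{\bullet}\to Y$ representing $h$.

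Because both $X$ and $Y$ live in degree $0$, each of these chain maps has exactly one nonzero component: $g^{0}:X\to Z^{0}$ (necessarily landing in $\ker d_{Z}^{0}$) and $h^{0}:Z^{0}\to Y$ (necessarily vanishing on $\mathrm{im}\,d_{Z}^{-1}$). Their composition as chain maps is therefore $h^{0}\circ g^{0}$, still concentrated in degree $0$. Hence $f=h\circ g=h^{\bullet}\circ g^{\bullet}=h^{0}\circ g^{0}$ in $D_{gp}^{b}(A\mbox{-}{\rm mod})$, and since the canonical embedding $A\mbox{-}{\rm mod}\hookrightarrow D_{gp}^{b}(A\mbox{-}{\rm mod})$ is fully faithful, the same equality persists in $A\mbox{-}{\rm mod}$, exhibiting $f$ as $h^{0}\circ g^{0}$ through $Z^{0}$.

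The substantive work has already been done inside Proposition~\ref{key}; for this corollary the only subtle point I foresee is the bookkeeping needed to line up the orthogonality hypothesis of Proposition~\ref{key} with the two conditions imposed on $Z^{\bullet}$ once $X$ and $Y$ are placed as stalks in degree $0$. I do not anticipate any deeper obstacle.
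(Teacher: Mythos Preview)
Your proposal is correct and follows essentially the same approach as the paper's own proof: apply Proposition~\ref{key} twice to replace $g$ and $h$ by genuine chain maps, read off the degree-$0$ components, and use full faithfulness of $A\mbox{-}{\rm mod}\hookrightarrow D^{b}_{gp}(A\mbox{-}{\rm mod})$. Your write-up is in fact more explicit than the paper's, since you spell out exactly how the orthogonality hypotheses on $Z^{\bullet}$ match the condition $X^{i}\in {^{\perp_{G}}Y^{j}}$ for $j<i$ in each of the two applications.
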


\vskip 10pt

The following definition is a Gorenstein version of \cite[Definition 4.1]{HP}.

\vskip 5pt

\begin{defn}\ A triangle functor $F:D_{gp}^{b}(A\mbox{-}{\rm mod})\lxr D_{gp}^{b}(B\mbox{-}{\rm mod})$ is called {\bf uniformly bounded} if there are integers $r<s$ such that $F(X)\in D_{gp}^{[r,s]}(B\mbox{-}{\rm mod})$, i.e. $F(X)^{i}$ vanishes for all $i< r$ and $i> s$, for all $X\in A\mbox{-}{\rm mod}$.
\end{defn}

\vskip 10pt

\begin{defn}
A triangle functor $F:D_{gp}^{b}(A\mbox{-}{\rm mod})\lxr D_{gp}^{b}(B\mbox{-}{\rm mod})$ is called {\bf nonnegative} if it satisfies the following conditions:

\vskip 5pt

\begin{enumerate}
\item \ $H^{i}{\rm Hom}_{D^{b}_{gp}(B\mbox{-}{\rm mod})}(G_{B}, F(X))=0$ for all $i<0$ and $X\in A\mbox{-}{\rm mod}$;

\vskip 5pt

\item \ $F(G)$ is isomorphic to a complex in $K^{b}(B\mbox{-}{\rm Gproj})$ with zero terms in all negative degrees for all $G\in A\mbox{-}{\rm Gproj}$.
\end{enumerate}
\end{defn}

\vskip 10pt

\begin{prop}\
\label{uniformly}
Let $F: D_{gp}^{b}(A\mbox{-}{\rm mod})\lxr D_{gp}^{b}(B\mbox{-}{\rm mod})$ be a Gorenstein derived equivalence between two CM-finite algebras $A$ and $B$. Then

\begin{enumerate}
\item \ $F$ is uniformly bounded.

\vskip 5pt

\item \ $F$ is nonnegative if and only if the Gorenstein tilting complex associated to $F$ is isomorphic in $K^{b}(A\mbox{-}{\rm Gproj})$ to a complex with zero terms in all positive degrees.
\end{enumerate}
\end{prop}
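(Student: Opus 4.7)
The plan is to handle parts (1) and (2) separately, both by tracking the Gorenstein tilting complex $E^{\bullet}\in K^{b}(A\mbox{-}{\rm Gproj})$ associated to $F$, which satisfies $E^{\bullet}\cong F^{-1}(G_{B})$ in $D_{gp}^{b}(A\mbox{-}{\rm mod})$. Fix a representative of $E^{\bullet}$ concentrated in a fixed range of degrees $[a,b]$.

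For part (1), I would transport the question through the equivalence to get, for any $X\in A\mbox{-}{\rm mod}$ and $m\in\mathbb{Z}$,
$${\rm Hom}_{D_{gp}^{b}(B\mbox{-}{\rm mod})}(G_{B},F(X)[m])\cong {\rm Hom}_{D_{gp}^{b}(A\mbox{-}{\rm mod})}(E^{\bullet},X[m]),$$
and then apply Proposition~\ref{key} with $X^{\bullet}=E^{\bullet}$ (bounded) and $Y^{\bullet}=X[m]$ (bounded below). The hypothesis $E^{j}\in{}^{\perp_{G}}(X[m])^{k}$ for $k<j$ reduces to $E^{j}\in{}^{\perp_{G}}X$, which holds because $E^{j}$ is Gorenstein projective and a direct chain-map count against a Gorenstein projective resolution of $X$ forces ${\rm Hom}_{D_{gp}^{b}(A\mbox{-}{\rm mod})}(E^{j},X[\ell])=0$ for $\ell>0$. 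Hence $E^{\bullet}\in\mathcal{U}_{X[m]}$, yielding ${\rm Hom}_{D_{gp}^{b}(A\mbox{-}{\rm mod})}(E^{\bullet},X[m])\cong {\rm Hom}_{K(A\mbox{-}{\rm mod})}(E^{\bullet},X[m])$; the latter vanishes by counting chain maps as soon as $m\notin[-b,-a]$. I would then upgrade this Hom-vanishing to a term-level bound: using that $G_{B}$ is a compact generator of the Gorenstein projective part of $D_{gp}^{b}(B\mbox{-}{\rm mod})$, a brutal-truncation argument shows that $F(X)$ admits a representative with zero terms outside $[-b,-a]$, establishing uniform boundedness with $r=-b$ and $s=-a$.

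For part (2), Lemma~\ref{0ncomplex} is the key bridge between the term-ranges of $E^{\bullet}$ and of $F(G_{A})$. For $(\Longleftarrow)$, assume $E^{\bullet}$ is concentrated in $[-n,0]$; then by Lemma~\ref{0ncomplex}, $F(G_{A})$ admits a representative $\bar{T}^{\bullet}\in K^{b}(B\mbox{-}{\rm Gproj})$ concentrated in $[0,n]$. Condition (i) in the definition of nonnegative functor follows from the same Hom-computation as in part (1): for $m<0$ no chain map $E^{\bullet}\to X[m]$ exists, since $E^{\bullet}$ sits in non-positive degrees whereas $X[m]$ is concentrated in the strictly positive degree $-m$. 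For condition (ii), any $G\in A\mbox{-}{\rm Gproj}$ is a direct summand of $G_{A}^{k}$ for some $k$, so $F(G)$ is a direct summand in $K^{b}(B\mbox{-}{\rm Gproj})$ of $(\bar{T}^{\bullet})^{k}$ and hence admits a representative concentrated in non-negative degrees. For $(\Longrightarrow)$, condition (ii) applied to $G=G_{A}$ gives a representative of $F(G_{A})$ in $K^{b}(B\mbox{-}{\rm Gproj})$ with zero terms in negative degrees, say concentrated in $[0,n]$; Lemma~\ref{0ncomplex} applied in reverse then yields a representative of $E^{\bullet}\cong F^{-1}(G_{B})$ concentrated in $[-n,0]$, i.e., with zero terms in all positive degrees.

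The main obstacle is the passage in part (1) from the Hom-vanishing ${\rm Hom}_{D_{gp}^{b}(B\mbox{-}{\rm mod})}(G_{B},F(X)[m])=0$ outside a fixed interval to the existence of a chain-complex representative of $F(X)$ with zero terms outside that interval, since $D_{gp}^{b}$ is a Verdier quotient and distinct chain complexes can represent the same object. I expect to handle this via brutal truncation combined with the generating property of $G_{B}$ for detecting zero objects in the Gorenstein derived category.
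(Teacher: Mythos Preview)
Your proposal is correct and follows essentially the same strategy as the paper: transport Homs through $F$ to the tilting complex $E^{\bullet}\cong F^{-1}(G_{B})$, count chain maps to get vanishing outside a fixed window, and use Lemma~\ref{0ncomplex} to link the term-ranges of $E^{\bullet}$ and $F(G_{A})$. The only deviations are technical: the paper skips your appeal to Proposition~\ref{key} because $E^{\bullet}\in K^{b}(A\mbox{-}{\rm Gproj})$ already gives ${\rm Hom}_{D^{b}_{gp}}(E^{\bullet},-)\cong{\rm Hom}_{K^{b}}(E^{\bullet},-)$ directly, and in the $(\Rightarrow)$ direction of (2) the paper shows ${\rm Hom}_{K^{b}}(G_{A},E^{\bullet}[i])\cong{\rm Hom}_{K^{b}}(F(G_{A}),G_{B}[i])=0$ for $i>0$ and concludes that $E^{\bullet}$ splits in positive degrees, rather than re-invoking Lemma~\ref{0ncomplex}; the paper also simply elides the Hom-vanishing-to-term-bound passage you flag as an obstacle.
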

\begin{proof}\  By \cite[Proposition 4.2]{GZ} we see that the functor $F$ induces a triangle equivalence  between $K^{b}(A\mbox{-}{\rm Gproj})$ and $K^{b}(B\mbox{-}{\rm Gproj})$. By Lemma~\ref{0ncomplex}, we may assume that $E^{\bullet}$ is a complex associated to $F$ such that $E^{i}=0$ for all $i>0$ and $i<-n$ and $F(E^{\bullet})\cong G_{B}$.

\vskip 5pt

(1) \  Let $X$ be in $A\mbox{-}{\rm mod}$. Then for any integer $i$,
$$\begin{aligned}
H^{i}({\rm Hom}_{B}(G_{B}, F(X)))&\cong{\rm Hom}_{D^{b}_{gp}(B\mbox{-}{\rm mod})}(G_{B}, F(X)[i])\\
&\cong {\rm Hom}_{K^{b}(A\mbox{-}{\rm mod})}(E^{\bullet}, X[i]).
\end{aligned}$$
So $H^{i}({\rm Hom}_{B}(G_{B}, F(X)))=0$ for all $i>n$ and $i<0$. This proves that $F$ is uniformly bounded.

\vskip 5pt

(2) \ By Lemma ~\ref{0ncomplex}, $F(G_{A})$ is isomorphic to a complex $\bar{E}^{\bullet}\in K^{[0,n]}(B\mbox{-}{\rm Gproj})$ for some nonnegative integer $n$. As an equivalence, $F$ preserves coproducts. This means that $F(G)\subseteq K^{[0,n]}(B\mbox{-}{\rm Gproj})$ for any $G\in A\mbox{-}{\rm Gproj}$. Let $X$ be an $A\mbox{-}$module. Since
$${\rm Hom}_{D_{gp}^{b}(B\mbox{-}{\rm mod})}(G_{B},\ F(X)[i])\cong {\rm Hom}_{D_{gp}^{b}(A\mbox{-}{\rm mod})}(E^{\bullet},\ X[i])=0$$
for all $i<0$, it follows that $H^{i}({\rm Hom}_{B}(G_{B}, F(X)))=0$ for all $X\in A\mbox{-}{\rm mod}$ and all $i<0$, i.e., $F$ is nonnegative.

\vskip 5pt

Conversely, suppose that $F$ is a nonnegative Gorenstein derived equivalence. Then $F(G_{A})$ is isomorphic to a bounded complex $Q^{\bullet}\in K^{\geq 0}(B\mbox{-}{\rm Gproj})$. Note that for all $i>0$,
$${\rm Hom}_{K^{b}(A\mbox{-}{\rm Gproj})}(G_{A},\ E^{\bullet}[i])\cong {\rm Hom}_{K^{b}(B\mbox{-}{\rm Gproj})}(F(G_{A}),\ G_{B}[i])=0.$$
This means that $H^{i}{\rm Hom}_{A}(G, \ E^{\bullet})=0$ for all $G\in A\mbox{-}{\rm Gproj}$ and $i>0$. This shows that $E^{\bullet}$ splits in all positive degrees and thus is isomorphic to a complex in $K^{b}(A\mbox{-}{\rm Gproj})$ with zero terms in all positive degrees.
\end{proof}

\vskip 10pt

\begin{lem}
\label{nonnegative}
Let $F: D^{b}_{gp}(A\mbox{-}{\rm mod}) \rightarrow D^{b}_{gp}(B\mbox{-}{\rm mod})$ be a uniformly bounded, nonnegative triangle functor. Suppose that there exists an integer $n>0$ such that $F(X) \subseteq D^{[0, n]}_{gp}(B\mbox{-}{\rm mod})$ for any $X\in A\mbox{-}{\rm mod}$. Then the following statements hold.

\vskip 5pt

\begin{enumerate}
\item \ If $F$ admits a right adjoint $H$, then $H$ is uniformly bounded and $H(Y) \subseteq D_{gp}^{[-n,0]}(A\mbox{-}{\rm mod})$.

\vskip 5pt

\item \ If $F$ admits a left adjoint $L$, then $L(Q)\in D^{[-n,0]}_{gp}(A\mbox{-}{\rm mod})$  for any $Q\in B\mbox{-}{\rm Gproj}$.

\vskip 5pt

\item \ If $H$ is both a left and right adjoint of $F$, then $H[-n]$ is uniformly bounded and nonnegative.
\end{enumerate}
\begin{proof}
(1) \ Let $P$ be a Gorenstein projective $A\mbox{-}$module. Then for any $B\mbox{-}$module $Y$,
$${\rm Hom}_{D^{b}_{gp}(A\mbox{-}{\rm mod})}(P, \ H(Y)[i]) \cong {\rm Hom}_{D^{b}_{gp}(B\mbox{-}{\rm mod})}(F(P), \ Y[i])=0$$
for all $i> 0$ and $i<-n$ since $F(P) \in K^{[0, n]}(B\mbox{-}{\rm Gproj})$. It follows that $H(Y) \in D^{[-n, 0]}_{gp}(A\mbox{-}{\rm mod})$.

\vskip 5pt

(2) \ Let $Q \in B\mbox{-}{\rm Gproj}$ and $X\in A\mbox{-}{\rm mod}$. Then
$${\rm Hom}_{D^{b}_{gp}(A\mbox{-}{\rm mod})}(L(Q), X[i])\cong {\rm Hom}_{D^{b}_{gp}(B\mbox{-}{\rm mod})}(Q, F(X)[i])=0$$
for all $i>n$ and $i<0$. This implies that $L(Q)\in K^{[-n, 0]}(A\mbox{-}{\rm Gproj})$.

\vskip 5pt

(3) \ It follows from (1) and (2) immediately.

\end{proof}
\end{lem}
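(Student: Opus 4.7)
The plan is to exploit the adjointness to transfer the given bound $F(X)\in D^{[0,n]}_{gp}(B\mbox{-}{\rm mod})$, together with the stronger statement $F(P)\in K^{[0,n]}(B\mbox{-}{\rm Gproj})$ (available for $P\in A\mbox{-}{\rm Gproj}$ by the nonnegativity hypothesis applied to the uniform bound), through $H$ and $L$. The underlying detection principle is that the vanishing of ${\rm Hom}_{D^{b}_{gp}(A\mbox{-}{\rm mod})}(P,Z[i])$ for every $P\in A\mbox{-}{\rm Gproj}$ and every $i$ outside an interval $[a,b]$ pins the bounded object $Z$ inside $D^{[a,b]}_{gp}(A\mbox{-}{\rm mod})$; in the CM-finite setting of the paper one may test this on the single generator $G_{A}$.

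For (1), fix $Y\in B\mbox{-}{\rm mod}$ and $P\in A\mbox{-}{\rm Gproj}$ and use the adjunction to rewrite
$${\rm Hom}_{D^{b}_{gp}(A\mbox{-}{\rm mod})}(P,H(Y)[i])\cong{\rm Hom}_{D^{b}_{gp}(B\mbox{-}{\rm mod})}(F(P),Y[i]).$$
Since $F(P)$ is a complex of Gorenstein projectives concentrated in degrees $[0,n]$ and $Y[i]$ is a stalk in cohomological degree $-i$, chain maps in $K^{b}$ (and hence, by the standard replacement lemma for bounded Gorenstein-projective complexes, morphisms in $D^{b}_{gp}$) into $Y[i]$ can only be nonzero when $-i\in[0,n]$. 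Therefore the right-hand side vanishes for $i>0$ and for $i<-n$, which forces $H(Y)\in D^{[-n,0]}_{gp}(A\mbox{-}{\rm mod})$ and proves that $H$ is uniformly bounded. Part (2) is the mirror image: for $Q\in B\mbox{-}{\rm Gproj}$ and arbitrary $X\in A\mbox{-}{\rm mod}$,
$${\rm Hom}_{D^{b}_{gp}(A\mbox{-}{\rm mod})}(L(Q),X[i])\cong{\rm Hom}_{D^{b}_{gp}(B\mbox{-}{\rm mod})}(Q,F(X)[i]),$$
and the hypothesis $F(X)\in D^{[0,n]}_{gp}(B\mbox{-}{\rm mod})$ makes the right side vanish whenever $i>n$ or $i<0$; by the same detection principle this confines $L(Q)$ to $D^{[-n,0]}_{gp}(A\mbox{-}{\rm mod})$.

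Part (3) is then a formal consequence: shifting the conclusion of (1) by $-n$ immediately gives $H[-n](Y)\in D^{[0,n]}_{gp}(A\mbox{-}{\rm mod})$, which is uniform boundedness for $H[-n]$. For the first nonnegativity clause I would compute $H^{i}{\rm Hom}_{A}(G_{A},H[-n](Y))\cong{\rm Hom}_{D^{b}_{gp}(B\mbox{-}{\rm mod})}(F(G_{A}),Y[i-n])$, which vanishes for $i<0$ because $F(G_{A})$ lives in $K^{[0,n]}(B\mbox{-}{\rm Gproj})$ while $Y[i-n]$ sits in cohomological degree $n-i>n$. For the second clause, apply (2) with $L=H$ to $Q\in B\mbox{-}{\rm Gproj}$; this places $H(Q)$ in $K^{[-n,0]}(A\mbox{-}{\rm Gproj})$, so $H[-n](Q)\in K^{[0,n]}(A\mbox{-}{\rm Gproj})$, as required.

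The main obstacle I anticipate lies in upgrading the weak containment $L(Q)\in D^{[-n,0]}_{gp}(A\mbox{-}{\rm mod})$ obtained in (2) to the stronger statement $L(Q)\in K^{[-n,0]}(A\mbox{-}{\rm Gproj})$ that is really used in verifying the second nonnegativity clause of (3). This step requires that $L$ (equivalently $H$ in (3)) sends $B\mbox{-}{\rm Gproj}$ into $K^{b}(A\mbox{-}{\rm Gproj})$, which is natural when $F$ restricts to a functor $K^{b}(A\mbox{-}{\rm Gproj})\to K^{b}(B\mbox{-}{\rm Gproj})$ (as one has in the derived-equivalence setting via \cite[Proposition 4.2]{GZ}), since adjoints of such restricted functors again preserve the Gorenstein-projective subcategory; but this point should be made explicit rather than invoked tacitly.
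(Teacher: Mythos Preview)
Your argument is correct and matches the paper's proof essentially line for line: the adjunction isomorphisms in (1) and (2), the resulting Hom-vanishings, and the deduction of (3) from (1) and (2) are exactly as the paper has them, only spelled out in more detail. Regarding the obstacle you flag at the end---upgrading $L(Q)\in D^{[-n,0]}_{gp}(A\mbox{-}{\rm mod})$ to $L(Q)\in K^{[-n,0]}(A\mbox{-}{\rm Gproj})$---the paper's own proof of (2) makes precisely this leap without comment, asserting the stronger conclusion directly from the Hom-vanishing; your remark that this is justified in the Gorenstein-derived-equivalence setting (where the lemma is actually applied) via the restriction of $F$ and its adjoints to $K^{b}(\mbox{-}{\rm Gproj})$ already says more than the paper does.
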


\vskip 10pt

\begin{lem}
Let $F: D^{b}_{gp}(A\mbox{-}{\rm mod}) \rightarrow D^{b}_{gp}(B\mbox{-}{\rm mod})$ be a uniformly bounded, nonnegative triangle functor. Then for any $A\mbox{-}$module $X$, there is a triangle $$U^{\bullet}_{X} \stackrel{i_{X}}{\longrightarrow}F(X) \stackrel{\pi_{X}}{\longrightarrow} M_{X} \stackrel{\mu_{X}}{\longrightarrow} U^{\bullet}_{X}[1]$$ in $D^{b}_{gp}(B\mbox{-}{\rm mod})$ with $M_{X} \in B\mbox{-}{\rm mod}$ and $U^{\bullet}_{X} \in D^{[1, n_{X}]}_{gp}(B\mbox{-}{\rm Gproj})$ for some $n_{X}>0$.
\end{lem}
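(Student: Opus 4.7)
My plan is to represent $F(X)$ in $D^{b}_{gp}(B\mbox{-}{\rm mod})$ by a bounded complex concentrated in nonnegative degrees---with Gorenstein projective terms in degrees $\geq 1$ and a $B$-module in degree $0$---and then to extract the desired triangle via the brutal truncation at degree $1$.

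First, I take a Gorenstein projective resolution $E^{\bullet}\in K^{-}(B\mbox{-}{\rm Gproj})$ of $F(X)$. Since $F$ is uniformly bounded, I may arrange that $E^{i}=0$ for $i>n_{X}$ for some integer $n_{X}>0$ depending on $X$. The $G\mbox{-}$quasi-isomorphism $E^{\bullet}\to F(X)$ induces a quasi-isomorphism ${\rm Hom}_{B}(G,E^{\bullet})\to {\rm Hom}_{B}(G,F(X))$ for every $G\in B\mbox{-}{\rm Gproj}$; since such a $G$ is a direct summand of a finite coproduct of copies of $G_{B}$, the nonnegativity condition~(1) gives
$$H^{i}{\rm Hom}_{B}(G,E^{\bullet})\cong {\rm Hom}_{D_{gp}^{b}(B\mbox{-}{\rm mod})}(G,F(X)[i])=0\ \ \text{for all}\ i<0.$$

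Next, I consider the short exact sequence of complexes $0\to \tau_{\leq -1}E^{\bullet}\to E^{\bullet}\to E^{\bullet}/\tau_{\leq -1}E^{\bullet}\to 0$ arising from the soft truncation at degree $0$. Using left-exactness of ${\rm Hom}_{B}(G,-)$ and the vanishing above, a direct cohomology computation shows that ${\rm Hom}_{B}(G,\tau_{\leq -1}E^{\bullet})$ is acyclic for every $G\in B\mbox{-}{\rm Gproj}$; the delicate case is at degree $-1$, where one identifies the cohomology with $H^{-1}{\rm Hom}_{B}(G,E^{\bullet})=0$ via ${\rm Hom}_{B}(G,\ker d^{-1})\cong \ker {\rm Hom}_{B}(G,d^{-1})$. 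Hence $\tau_{\leq -1}E^{\bullet}$ is $G\mbox{-}$acyclic, becomes zero in $D_{gp}^{b}(B\mbox{-}{\rm mod})$, and therefore $F(X)\cong \tau_{\geq 0}E^{\bullet}$ in $D_{gp}^{b}(B\mbox{-}{\rm mod})$.

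Finally, $\tau_{\geq 0}E^{\bullet}$ is concentrated in degrees $[0,n_{X}]$, with $E^{i}\in B\mbox{-}{\rm Gproj}$ for $1\leq i\leq n_{X}$ and with $M_{X}:={\rm coker}(E^{-1}\to E^{0})\in B\mbox{-}{\rm mod}$ in degree $0$. Setting $U^{\bullet}_{X}:=\sigma_{\geq 1}\tau_{\geq 0}E^{\bullet}\in K^{[1,n_{X}]}(B\mbox{-}{\rm Gproj})$, the degree-wise split short exact sequence $0\to U^{\bullet}_{X}\to \tau_{\geq 0}E^{\bullet}\to M_{X}\to 0$ (with $M_{X}$ viewed as a stalk complex in degree $0$) gives the desired triangle in $D_{gp}^{b}(B\mbox{-}{\rm mod})$. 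The only step I expect to be subtle is the $G\mbox{-}$acyclicity of $\tau_{\leq -1}E^{\bullet}$, where the soft truncation must correctly transfer the assumed vanishing of Gorenstein cohomology in each degree $<0$, particularly at degree $-1$, where $\ker d^{-1}$ takes the place of $E^{-1}$.
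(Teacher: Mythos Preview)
Your proposal is correct and follows essentially the same approach as the paper: take a Gorenstein projective resolution of $F(X)$ and perform the good (soft) truncation at degree zero, then split off the degree-zero term via the brutal truncation to obtain the triangle. The paper's proof is a single sentence (``Let $U^{\bullet}_{X}$ be the Gorenstein projective resolution of $F(X)$, and then do good truncation of degree zero''), while you spell out carefully why the nonnegativity hypothesis forces $\tau_{\leq -1}E^{\bullet}$ to be $G$-acyclic, which is exactly the content behind that sentence.
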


\vskip 5pt

\begin{proof}
Let $U^{\bullet}_{X}$ be the Gorenstein projective resolution of $F(X)$, and then do good truncation of degree zero.
\end{proof}

\vskip 10pt

\begin{lem}
Assume that $U^{\bullet}_{i} \stackrel{\alpha_{i}}{\longrightarrow} X^{\bullet}_{i} \stackrel{\beta_{i}}{\longrightarrow} M_{i} \stackrel{\gamma_{i}}{\longrightarrow} U^{\bullet}_{i}[1]$, i=1, 2, are triangles in $D^{b}_{gp}(B\mbox{-}{\rm mod})$ such that $M_{1}, M_{2}$ are in $B\mbox{-}{\rm mod}$ and $U^{\bullet}_{1}, U^{\bullet}_{2} \in K^{[1, n]}(B\mbox{-}{\rm Gproj})$. Then, for each morphism $f: X^{\bullet}_{1} \rightarrow X^{\bullet}_{2}$ in $D^{b}_{gp}(B\mbox{-}{\rm mod})$, there is a morphism $b: M_{1} \rightarrow M_{2}$ in $B\mbox{-}{\rm mod}$ and a morphism $a: U^{\bullet}_{1} \rightarrow U^{\bullet}_{2}$ in $D^{b}_{gp}(B\mbox{-}{\rm mod})$ such that the diagram
$$\begin{CD}
U^{\bullet}_{1} @>\alpha_{1}>> X^{\bullet}_{1} @>\beta_{1}>> M_{1} @>\gamma_{1}>> U^{\bullet}_{1}[1]\\
@VVaV  @VVfV  @VVbV  @VVa[1]V\\
U^{\bullet}_{2} @>\alpha_{2}>> X^{\bullet}_{2} @>\beta_{2}>> M_{2} @>\gamma_{2}>> U^{\bullet}_{2}[1]
\end{CD}$$ commutes. Moreover, if $f$ is an isomorphism in $D^{b}_{gp}(B\mbox{-}{\rm mod})$, then $\underline{b}$ is an isomorphism in $B\mbox{-}{\rm mod}/B\mbox{-}{\rm Gproj}$.
\begin{proof}
Since
$${\rm Hom}_{D^{b}_{gp}(B\mbox{-}{\rm mod})}(U^{\bullet}_{1}, M_{2})\cong {\rm Hom}_{K^{b}(B\mbox{-}{\rm mod})}(U^{\bullet}_{1}, M_{2})=0,$$
we see that $\alpha_{1}f\beta_{2}$ is zero, and so $a$ and $b$ exist.

\vskip 5pt

Now assume that $f$ is an isomorphism in $D^{b}_{gp}(B\mbox{-}{\rm mod})$. Namely, there is an isomorphism $g: X^{\bullet}_{2} \rightarrow X^{\bullet}_{1}$ in $D^{b}_{gp}(B\mbox{-}{\rm mod})$ such that $fg={\rm Id}_{X^{\bullet}_{1}}$ and $gf={\rm Id}_{X^{\bullet}_{2}}$. By the above similar discussion, there is a morphism $c: M_{2} \rightarrow M_{1}$, such that $c\beta_{2}=\beta_{1}g$. Then $$\beta_{1}-cb\beta_{1}=\beta_{1}-c\beta_{2}f=\beta_{1}-\beta_{1}gf=0$$
and ${\rm Id}_{M_{1}}-cb$ factors through $U^{\bullet}_{1}[1]$. Then ${\rm Id}_{M_{1}}-cb$ factors through a Gorenstein projective $B\mbox{-}$module, and hence $\underline{c}\circ \underline{b}=\underline{{\rm Id}}_{M_{1}}$  in $B\mbox{-}{\rm mod}/B\mbox{-}{\rm Gproj}$. Similarly we have $\underline{b}\circ\underline{c}=\underline{{\rm Id}}_{M_{2}}$, and therefore $\underline{b}: M_{1} \rightarrow M_{2}$ is an isomorphism in $B\mbox{-}{\rm mod}/B\mbox{-}{\rm Gproj}$.
\end{proof}
\end{lem}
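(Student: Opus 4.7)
The plan is to reduce the existence of $a$ and $b$ to the triangle axiom (TR3) via a key vanishing, and then to construct a candidate inverse to $\underline{b}$ by applying the same mechanism to $f^{-1}$. The pivotal observation is that
$${\rm Hom}_{D^{b}_{gp}(B\mbox{-}{\rm mod})}(U^{\bullet}_{1},\ M_{2}) = 0.$$
Indeed, $U^{\bullet}_{1}\in K^{[1,n]}(B\mbox{-}{\rm Gproj})$ has support in strictly positive degrees while $M_{2}$ is a stalk in degree $0$, and Proposition~\ref{key} lets me represent any morphism $U^{\bullet}_{1}\to M_{2}$ in the Gorenstein derived category by an honest chain map; every such chain map vanishes term-by-term by degree considerations.

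Applied to $h := f\beta_{2}\colon X^{\bullet}_{1}\to M_{2}$, this vanishing forces $\alpha_{1}h = 0$ in $D^{b}_{gp}(B\mbox{-}{\rm mod})$, so $h$ factors as $\beta_{1}\circ b'$ for some $b'\colon M_{1}\to M_{2}$. Since $M_{1},M_{2}\in B\mbox{-}{\rm mod}$ and the canonical inclusion $B\mbox{-}{\rm mod}\hookrightarrow D^{b}_{gp}(B\mbox{-}{\rm mod})$ is fully faithful (a direct consequence of Proposition~\ref{key} applied to two stalk complexes in degree $0$), the map $b'$ is induced by an honest $B$-homomorphism $b\colon M_{1}\to M_{2}$. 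The remaining morphism $a\colon U^{\bullet}_{1}\to U^{\bullet}_{2}$ is then produced by the morphism-of-triangles axiom applied to the square already made to commute by $f$ and $b$.

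For the final assertion, set $g = f^{-1}$ and apply the first part to $g$ to produce $c\colon M_{2}\to M_{1}$ fitting into a morphism of triangles in the reverse direction. A direct computation in $D^{b}_{gp}(B\mbox{-}{\rm mod})$ yields
$$\beta_{1}({\rm Id}_{M_{1}} - cb) \;=\; \beta_{1} - c\beta_{2}f \;=\; \beta_{1} - \beta_{1}gf \;=\; 0,$$
so ${\rm Id}_{M_{1}} - cb$ factors through the cone $U^{\bullet}_{1}[1]$. The main obstacle I foresee is passing from this factorization through a bounded complex to a factorization through a single object of $B\mbox{-}{\rm Gproj}$. I would overcome it by invoking Corollary~\ref{factors} with $Z^{\bullet} = U^{\bullet}_{1}[1]$: all negative-degree terms vanish (so the $M_{1}^{\perp_{G}}$ condition is automatic), and for $i>0$ each $(U^{\bullet}_{1}[1])^{i}=U^{i+1}_{1}$ is Gorenstein projective, hence lies in ${}^{\perp_{G}}M_{1}$ because ${\rm Hom}_{D^{b}_{gp}(B\mbox{-}{\rm mod})}(G,\ M[j])=0$ whenever $G\in B\mbox{-}{\rm Gproj}$ and $j>0$. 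Thus ${\rm Id}_{M_{1}} - cb$ factors through the degree-$0$ term $U^{1}_{1}\in B\mbox{-}{\rm Gproj}$, giving $\underline{c}\circ\underline{b} = \underline{{\rm Id}}_{M_{1}}$ in $B\mbox{-}{\rm mod}/B\mbox{-}{\rm Gproj}$. A symmetric argument yields $\underline{b}\circ\underline{c} = \underline{{\rm Id}}_{M_{2}}$, and therefore $\underline{b}$ is an isomorphism in the Gorenstein stable category.
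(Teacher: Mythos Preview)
Your proof is correct and follows the same route as the paper's: the vanishing ${\rm Hom}_{D^{b}_{gp}}(U^{\bullet}_{1},M_{2})=0$ feeds (TR3) to produce $a$ and $b$, and applying the same to $g=f^{-1}$ yields $c$ with ${\rm Id}_{M_{1}}-cb$ factoring through $U^{\bullet}_{1}[1]$. The only difference is that you explicitly invoke Corollary~\ref{factors} (with $Z^{\bullet}=U^{\bullet}_{1}[1]$, checking the ${}^{\perp_{G}}$ hypotheses) to pass from a factorization through the complex to one through the single Gorenstein projective $U^{1}_{1}$, whereas the paper simply asserts this step; your added justification is welcome and is exactly what the corollary is there for.
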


\vskip 10pt

Next we define the functor $\overline{F}: A\mbox{-}{\rm mod}/A\mbox{-}{\rm Gproj}\rightarrow B\mbox{-}{\rm mod}/B\mbox{-}{\rm Gproj}$ as follows. We fix a triangle
$$\xi_{X}: U^{\bullet}_{X} \stackrel{i_{X}}\longrightarrow F(X) \stackrel{\pi_{X}}\longrightarrow M_{X} \stackrel{\mu_{X}}\longrightarrow U^{\bullet}_{X}[1]$$ in $D^{b}_{gp}({B\mbox{-}{\rm mod}})$ with $M_{X}\in B\mbox{-}{\rm mod}$ and $U^{\bullet}_{X}$ a complex in $K^{[1, n_{X}]}(B\mbox{-}{\rm Gproj})$ for some $n_{X}>0$.  For each morphism $f: X\rightarrow Y$ in $A\mbox{-}{\rm mod}$, we can form the following diagram in $D^{b}_{gp}({B\mbox{-}{\rm mod}})$:
$$
\begin{CD}
U^{\bullet}_{X} @>i_{X}>> F(X) @>\pi_{X}>> M_{X} @>\mu_{X}>> U^{\bullet}_{X}[1]\\
 @VVa_{f}V                @VVF(f)V         @VVb_{f}V            @VVa_{f}[1]V\\
U^{\bullet}_{Y} @>i_{Y}>> F(Y) @>\pi_{Y}>> M_{Y} @>\mu_{Y}>> U^{\bullet}_{Y}[1]\\
\end{CD}
$$
If $b^{'}_{f}$ is another morphism such that $b^{'}_{f}\pi_{X}=\pi_{Y}F(f)$, then $(b_{f}-b^{'}_{f})\pi_{X}=0$ and $b_{f}-b^{'}_{f}$ factors through $U^{1}_{X}$ which is Gorenstein projective, so $\underline{b_{f}}=\underline{b^{'}_{f}} \in {\rm Hom}_{B\mbox{-}{\rm mod}/B\mbox{-}{\rm Gproj}}(M_{X}, \ M_{Y})$. Moreover, if $f$ factors through a Gorenstein projective $A$-module $P$, say $f=hg$ for $g:X\longrightarrow P$ and $h: P\longrightarrow Y$. Then $(b_{f}-b_{h}b_{g})\pi_{X}=\pi_{Y}F(f)-\pi_{Y}F(h)F(g)=0$. Hence $b_{f}-b_{h}b_{g}$ factors through $U^{\bullet}_{X}[1]$,  and then $b_{f}$ factors through $P\oplus U^{1}_{X}$ which is Gorenstein projective. Hence $\underline{b_{f}}=0$. Thus we get a well-defined map:
$$\phi: {\rm Hom}_{A\mbox{-}{\rm mod}/A\mbox{-}{\rm Gproj}}(X, \ Y) \rightarrow {\rm Hom}_{B\mbox{-}{\rm mod}/B\mbox{-}{\rm Gproj}}(M_{X}, \ M_{Y}), \ \underline{f} \mapsto \underline{b_{f}}.$$
Define $\overline{F}:= M_{X}$ for each $X \in A\mbox{-}{\rm mod}/A\mbox{-}{\rm Gproj}$ and $\overline{F}(\underline{f}):= \phi(\underline{f})$ for each morphism $f$ in $A\mbox{-}{\rm mod}$, we get a functor $\overline{F}: A\mbox{-}{\rm mod}/A\mbox{-}{\rm Gproj}\longrightarrow B\mbox{-}{\rm mod}/B\mbox{-}{\rm Gproj}$.

\vskip 10pt

\begin{thm}
\label{Gorenstein stable}
Let $A$ and $B$ be CM-finite algebras, and $F:D^{b}_{gp}(A\mbox{-}{\rm mod}) \rightarrow D^{b}_{gp}(B\mbox{-}{\rm mod})$ be a Gorenstein derived equivalence with the quasi-inverse $G$. Then the induced stable functors $$\overline{F}: A\mbox{-}{\rm mod}/A\mbox{-}{\rm Gproj}\longrightarrow B\mbox{-}{\rm mod}/B\mbox{-}{\rm Gproj}$$ and $$\overline{G}: B\mbox{-}{\rm mod}/B\mbox{-}{\rm Gproj}\rightarrow A\mbox{-}{\rm mod}/A\mbox{-}{\rm Gproj}$$
satisfy
$$\overline{F} \circ \overline{G[-n]} \cong \overline{[-n]} \cong \Omega^{n}_B, \ and \ \overline{G[-n]}\circ\overline{F}  \cong \overline{[-n]} \cong \Omega^{n}_A,$$
where $\Omega^{n}_A$ and $\Omega^{n}_B$ are the $n$-th syzygy functor.

\vskip 5pt

Consequently, if furthermore $A$ and $B$ are Gorenstein algebras, then $A$ and $B$ are Gorenstein stably equivalent.
\begin{proof}\
Since $G$ is both a left adjoint and right adjoint of $F$, there exists some integer $n>0$ such that $G[-n]$ is nonnegative by Lemma~\ref{nonnegative}. Moreover, $F[n]$ is a right adjoint of $G[-n]$, and sends Gorenstein projective $A\mbox{-}$modules to complexes in $K^{b}(B\mbox{-}{\rm Gproj})$.

\vskip 5pt

Note that there is the  following diagram of functors as follows:
$$
\begin{CD}
A\mbox{-}{\rm mod}/A\mbox{-}{\rm Gproj} @>can>> D^{b}_{gp}(A\mbox{-}{\rm mod})/K^{b}(A\mbox{-}{\rm Gproj})\\
@VV\overline{F}V  @VVFV \\
B\mbox{-}{\rm mod}/B\mbox{-}{\rm Gproj} @>can>> D^{b}_{gp}(B\mbox{-}{\rm mod})/K^{b}(B\mbox{-}{\rm Gproj})\\
\end{CD}
$$
Thus, we have isomorphisms of functors
$$\overline{F} \circ \overline{G[-n]} \cong \overline{[-n]} \cong \Omega^{n}_B$$
and
$$ \overline{G[-n]}\circ\overline{F}  \cong  \overline{[-n]} \cong \Omega^{n}_A.$$

\vskip 5pt

If $A$ and $B$ are Gorenstein, then from above equalities $A$ and $B$ are Gorenstein stably equivalent.
\end{proof}
\end{thm}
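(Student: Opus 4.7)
The proof plan combines the three main inputs of this section: Proposition~\ref{uniformly} (a Gorenstein derived equivalence is uniformly bounded and can be arranged to be nonnegative), Lemma~\ref{nonnegative}(3) (when a single functor $H$ serves as both left and right adjoint of $F$, there is an integer $n$ making $H[-n]$ nonnegative and uniformly bounded), and the explicit construction of the stable functor $\overline{F}$ given in the paragraph immediately preceding the theorem. First I would note that, by Proposition~\ref{uniformly}(2) together with Lemma~\ref{0ncomplex}, the equivalence $F$ can be assumed nonnegative with $F(X)\in D_{gp}^{[0,n]}(B\mbox{-}{\rm mod})$ for all $A$-modules $X$, after an appropriate shift of the associated Gorenstein silting complex. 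Since $G$ is quasi-inverse to $F$ it is both a left and right adjoint, so Lemma~\ref{nonnegative}(3) yields $n>0$ such that $G[-n]$ is also uniformly bounded and nonnegative. The stable-functor construction then applies to both $F$ and $G[-n]$ and produces $\overline{F}$ and $\overline{G[-n]}$.

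The next step is to establish the composition isomorphisms. I would use the commutative diagram displayed in the body of the proof: the canonical functor $A\mbox{-}{\rm mod}/A\mbox{-}{\rm Gproj}\hookrightarrow D^{b}_{gp}(A\mbox{-}{\rm mod})/K^{b}(A\mbox{-}{\rm Gproj})$ intertwines $\overline{F}$ with the functor induced by $F$ on the Verdier quotient (and analogously for $G[-n]$). Since $F\circ G\cong \mathrm{Id}$ in $D^{b}_{gp}(B\mbox{-}{\rm mod})$, we have $F\circ G[-n]\cong [-n]$, and this descends through the diagram to $\overline{F}\circ\overline{G[-n]}\cong \overline{[-n]}$; the argument for the other composition is symmetric. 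The identification $\overline{[-n]}\cong \Omega^{n}$ on the Gorenstein stable category is the standard fact that, modulo Gorenstein projectives, a brutal truncation of a projective resolution of $X$ represents $X[-n]$ in the Verdier quotient with degree-zero term $\Omega^{n}(X)$, so the two shifts are functorially isomorphic after passage to the stable category.

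Finally, when $A$ and $B$ are Gorenstein Artin algebras every finitely generated module has finite Gorenstein projective dimension, so the syzygy functors $\Omega^{n}_{A}$ and $\Omega^{n}_{B}$ are autoequivalences of the respective Gorenstein stable categories; combined with the two displayed isomorphisms, $\overline{F}$ becomes an equivalence with quasi-inverse essentially $\overline{G[-n]}\circ (\Omega^{n}_{A})^{-1}$, yielding the claimed Gorenstein stable equivalence. The main obstacle in carrying this out rigorously is verifying that the construction $X\mapsto M_{X}$, although involving several choices (the triangle $\xi_{X}$, the lifts $b_{f}$), is functorial modulo Gorenstein projectives and is compatible with composing $F$ with $G[-n]$; this is exactly where the nonnegativity and uniform-boundedness hypotheses are essential, as they guarantee the Hom-vanishing needed to make the lifted morphisms unique up to factorization through Gorenstein projective modules.
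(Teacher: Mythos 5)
Your route is essentially the paper's own: normalize $F$ to be nonnegative via Proposition~\ref{uniformly} and Lemma~\ref{0ncomplex}, invoke Lemma~\ref{nonnegative}(3) to obtain $n>0$ with $G[-n]$ nonnegative and uniformly bounded, build $\overline{F}$ and $\overline{G[-n]}$ by the construction preceding the theorem, descend $F\circ G[-n]\cong [-n]$ through the commutative diagram with the canonical functors to $D^{b}_{gp}/K^{b}(\mathrm{Gproj})$, and identify $\overline{[-n]}$ with $\Omega^{n}$ via truncated projective resolutions. Up to the amount of detail supplied, this matches the paper.

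The genuine problem is your final step. It is false that for a Gorenstein algebra $A$ the syzygy functor $\Omega^{n}_{A}$ is an autoequivalence of $A\mbox{-}{\rm mod}/A\mbox{-}{\rm Gproj}$; finiteness of Gorenstein projective dimension works against this claim rather than for it. If ${\rm inj.dim}\,{}_{A}A=d$, then $\Omega^{d}_{A}(M)$ is Gorenstein projective for every module $M$, so $\Omega^{n}_{A}$ is the \emph{zero} functor on $A\mbox{-}{\rm mod}/A\mbox{-}{\rm Gproj}$ for all $n\geq d$. Already for the path algebra of the quiver $A_{2}$ (Gorenstein, with $A\mbox{-}{\rm Gproj}=A\mbox{-}{\rm proj}$ and a nonzero stable category), $\Omega_{A}$ annihilates the simple module at the source, hence is not an equivalence. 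Consequently $(\Omega^{n}_{A})^{-1}$ does not exist and your proposed quasi-inverse $\overline{G[-n]}\circ(\Omega^{n}_{A})^{-1}$ is not available; the two displayed isomorphisms by themselves cannot force $\overline{F}$ to be an equivalence. Indeed, taking $A=B$ hereditary and $F=[-1]$ (which is a nonnegative Gorenstein derived auto-equivalence with $G[-1]=\mathrm{Id}$), one finds $\overline{F}\cong\Omega_{A}$, which satisfies both displayed isomorphisms yet is not an equivalence. The paper itself gives no justification for the passage from the two isomorphisms to the Gorenstein stable equivalence, so this is precisely the point where an additional argument is needed (compare the more delicate treatment in Hu--Pan for ordinary derived equivalences); the inference you propose from finite Gorenstein projective dimension does not close that gap.
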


\vskip 10pt

\vskip 20pt

\end{document}